\numberwithin{equation}{section}
\DeclareFontFamily{OT1}{pzc}{}
\DeclareFontShape{OT1}{pzc}{m}{it}{<-> s * [0.900] pzcmi7t}{}
\newlength{\dhatheight}
\newcommand{\pc}[1]{#1^{{\textcircled{\tiny d}}}}
\definecolor{astral}{RGB}{46,116,181}
\definecolor{darkslategray}{rgb}{0.18, 0.31, 0.31}
\definecolor{warmblack}{rgb}{0.0, 0.26, 0.26}
\newcommand{\core}[1]{#1^{\tiny{\textcircled{\tiny\#}}}}
\definecolor{darkslategray}{rgb}{0.18, 0.31, 0.31}
\definecolor{warmblack}{rgb}{0.0, 0.26, 0.26}
\def\bState{\State\hskip-\aLG@thistlm}
\newtheorem{theorem}{Theorem}[section]
\newtheorem{lemma}[theorem]{Lemma}
\newtheorem{corollary}[theorem]{Corollary}
\theoremstyle{definition}
\newtheorem{definition}{Definition}[section]
\newtheorem{remark}{Remark}[section]
\journal{.....}
\begin{document}

\begin{frontmatter}

\title{On  WD and WDMP generalized inverses in rings}

\vspace{-.4cm}

\author{Amit Kumar$^a$ and Debasisha Mishra$^{*a}$}

 \address{
                          $^a$Department of Mathematics,\\
                        National Institute of Technology Raipur\\ Chhattishgrah, India.
                        \\email: amitdhull513@gmail.com  \\
                        email$^*$: kapamath\symbol{'100}gmail.com. \\}
\vspace{-2cm}

\begin{abstract}
 % In this article, we  introduce two new generalized inverses named weak Drazin (WD) and weak Drazin Moore-Penrose (WDMP) inverses for elements in rings. WD inverse and WDMP inverse can be seen as a generalization of
 % GD inverse introduced by Wang and Liu [Linear Algebra Appl.,
 % 488 (2016) 235-248; MR3419784] and GDMP inverse  introduced by  Thome {\it et al.} [Linear Multilinear Algebra, (2020) DOI: 10.1080/03081087.2020.1857678] for square matrices to elements in rings, respectively. 
Motivated by the very recent work of Gao, Y.,  Chen, J., Wang, J., Zou, H. [Comm. Algebra, 49(8) (2021) 3241-3254; 
MR4283143],  we  introduce two new generalized inverses named weak Drazin (WD) and weak Drazin Moore-Penrose (WDMP) inverses for elements in rings.
 A few of their 
properties are then provided, and the fact that the proposed generalized inverses coincide with different well-known generalized inverses under certain assumptions is established. Further, we discuss additive properties, reverse-order law and forward-order law for WD and WDMP generalized inverses. Some examples are also provided in support of the theoretical results. 
 \end{abstract}

\begin{keyword}
Generalized inverse; WD inverse; WDMP inverse; DMP inverse.\\
{\bf Mathematics Subject Classification:} 15A09, 16W10. 

\end{keyword}

\end{frontmatter}

\newpage
\section{Introduction and Preliminaries}
Let $R$ be a proper unitary ring with involution whose unity is 1. A ring $R$ is said to be a {\it proper ring} if $a^*a=0\implies a=0$ for all $a\in R$.  An {\it involution $*$} is an  anti-isomorphism that satisfies the conditions: 
$$(a+b)^*=a^*+b^*,~~(ab)^*=b^*a^*, \text{ and } (a^*)^*=a \text{ for all }a, b \in R.$$
\noindent Let $a\in R.$ Then, the {\it commutant} and the {\it double commutant} of $a$  are defined by
$$comm(a) = \{x \in R : ax = xa\},$$
and
$$comm^2(a) = \{x \in R : xy = yx \text{ for all } y \in comm(a)\},$$
respectively. By $N(R)$, we denote the set of all nilpotent elements of $R.$ An element $a$  is said to be {\it Hermitian} if $a^*=a$, and  is called  {\it idempotent}  if $a^2=a.$ An element $a$ is {\it quasinilpotent} if $1 + xa \in R^{-1} \text{ for all } x \in comm(a)$, where $R^{-1}$ denotes the set of all the  standard invertible elements of $R$.
\noindent  An element $a\in R$ is {\it Moore-Penrose invertible} if there exists a unique element $x\in R$ that satisfies the equations:
$$(1.) axa=a, ~(2.) xax=x, ~(3.) (ax)^*=ax, \textnormal{ and } (4.) (xa)^*=xa.$$
Then, $x$ is called as the {\it Moore-Penrose} \cite{Mpenrose} inverse  of $a$, and is denoted as $x=a^{\dagger}$. By $R^{\dagger}$, we denote the set of all Moore-Penrose invertible elements of $R$.
An element $a $ is called  {\it Drazin invertible} \cite{Drazin}  if  there exists a unique element $x\in R$ such that
$xa^{k+1}=a^{k},~~ax=xa,\text{ and } ax^2=x$, for some positive integer $k$. If the Drazin inverse of $a$ exists, then it is denoted by $a^d$. The smallest positive integer $k$ is called the {\it Drazin index}, is denoted by $i(a)$. The set of all Drazin invertible elements of $R$ will be denoted by $R^{d}.$  If $i(a) = 1$, then the Drazin inverse of $a$ is called the {\it group inverse} of $a$, and is denoted by $a^{\#}$. The set of group invertible elements of $R$ will be denoted by $R^{\#}$.  An element $a$ is EP  \cite{epl} if and only if it commutes with its Moore-Penrose inverse, i.e., $aa^{\dagger}=a^{\dagger}a.$ The set of all EP invertible elements of $R$ will be denoted by $R^{EP}.$\par
In 2017, Xu {\it et al}. \cite{Xu} proved that  an element $a$ is  {\it core invertible} if there exists a unique element $x\in R$ satisfying the following conditions: 
$$(ax)^*=ax,~~ax^2=x,~ \text{ and }~ xa^2=a,$$
 is denoted by $\core{a}$.  The set of all core invertible elements of $R$ will be denoted by $\core{R}.$ An element $a$  is {\it pseudo core invertible} \cite{Mdrazin}  if there exists a unique element $x\in R$ such that
$$(ax)^*=ax,~~ax^2=x,~ \text{ and }~xa^{k+1}=a^k,$$
for some positive integer $k$. The least positive integer $k$ for which the above equations hold is called the {\it pseudo core index}, and is denoted by $I(a).$  The pseudo core inverse of $a$ is denoted by $\pc{a}$, and $\pc{R}$ denotes the set of all pseudo core invertible elements of $R$. An element $a \in R$ has generalized {\it Hirano inverse} \cite{her1} if there exists $b \in R$ such that $b = bab,~~ b \in comm^2(a), \text{ and } (a^2-ab) \in R^{qnil}$, where $R^{qnil}$ is the set of all quasinilpotent elements of $R$. Chen and Sheibani \cite{her1} proved the following results for Hirano invertible elements.
\begin{theorem}(Theorem 3.1, \cite{her1})\label{kd1}\\
An element $a \in R$  has Hirano inverse if and only if $a - a^3 \in N(R)$.
\end{theorem}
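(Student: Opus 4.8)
The plan is to prove the two implications separately, using that a Hirano inverse of $a$ is an element $b$ with $b=bab$, $b\in comm^2(a)$, and $a^2-ab\in N(R)$. Throughout I will exploit that $a\in comm(a)$, so any $b\in comm^2(a)$ commutes with $a$; hence $ab=ba$, and $e:=ab$ satisfies $e^2=a(bab)=ab=e$, i.e. $e$ is idempotent. I will also use the standard inclusion $comm^2(a^2)\subseteq comm^2(a)$, which holds because $comm(a)\subseteq comm(a^2)$.

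For necessity, suppose such a $b$ exists and put $e=ab=ba$ and $w:=a^2-e\in N(R)$, so $w$ commutes with both $a$ and $e$. The key identity is
\[
a-a^3=a-a(e+w)=a(1-e)-aw .
\]
First I would observe $(a(1-e))^2=a^2(1-e)=(e+w)(1-e)=w(1-e)$, which is nilpotent, whence $a(1-e)$ is nilpotent; likewise $aw$ is nilpotent since $w$ is. As $a$, $e$, $w$ pairwise commute, $a(1-e)$ and $aw$ commute, and a difference of commuting nilpotents is nilpotent. Therefore $a-a^3\in N(R)$.

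For sufficiency, I start from $a-a^3\in N(R)$ and note $a^2-a^4=a(a-a^3)\in N(R)$ because $a$ commutes with $a-a^3$. Writing $c=a^2$, this says $c-c^2\in N(R)$, say $(c-c^2)^m=0$, i.e. $c^m(1-c)^m=0$. The heart of the argument is to manufacture a spectral idempotent: expanding $1=(c+(1-c))^{2m-1}$ and collecting the terms carrying a factor $c^m$ produces an idempotent $e=c^m g(c)$, a polynomial in $c=a^2$, with complementary part $1-e=(1-c)^m h(c)$. Then $c(1-e)$ is nilpotent, $ce$ is a unit in the corner ring $eRe$, and since $ce-(ce)^2=(c-c^2)e$ is nilpotent there, $ce-e$ is nilpotent as well; splitting $c-e=(ce-e)+c(1-e)$ into orthogonal nilpotent pieces gives $a^2-e=c-e\in N(R)$. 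This $e$ is exactly $cc^d$ for the Drazin inverse $c^d=(a^2)^d$, and I would set
\[
b=a\,(a^2)^d .
\]
Because $(a^2)^d\in comm^2(a^2)\subseteq comm^2(a)$ and $a\in comm^2(a)$, we get $b\in comm^2(a)$. Then $ab=a^2(a^2)^d=cc^d=e$, so $a^2-ab=a^2-e\in N(R)$; and $bab=a(a^2)^d\,a^2\,(a^2)^d=a\,c^d c c^d=a\,(a^2)^d=b$. Thus $b$ is a Hirano inverse of $a$.

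The forward direction is routine commuting-nilpotent bookkeeping; the main obstacle is the backward direction, specifically the construction of the idempotent $e$ from the nilpotence of $c-c^2$ (equivalently, the existence of $(a^2)^d$) and the verification that $b=a(a^2)^d$ genuinely lies in $comm^2(a)$ and that $a^2-ab$ is nilpotent rather than merely that $c(1-e)$ is. I would double-check the latter via the orthogonal splitting $c-e=(ce-e)+c(1-e)$ indicated above.
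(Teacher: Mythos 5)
Your proof is correct. Note first that the paper itself gives no proof of this statement: it is imported verbatim from Chen and Sheibani (Theorem 3.1 of the cited reference), so there is no in-paper argument to compare against; what you have written is a self-contained proof of the cited result, and it follows essentially the same route as the source, namely reducing everything to the spectral idempotent of $c=a^2$. Both directions check out: in the necessity part the decomposition $a-a^3=a(1-e)-aw$ with $e=ab$ idempotent and $w=a^2-e$ nilpotent, together with $(a(1-e))^2=w(1-e)$, correctly exhibits $a-a^3$ as a difference of commuting nilpotents; in the sufficiency part the binomial construction of $e=c^mg(c)$ from $c^m(1-c)^m=0$, the orthogonal splitting $c-e=(ce-e)+c(1-e)$, and the choice $b=a(a^2)^d$ all work, and since $e$ and $(a^2)^d$ are polynomials in $a$, the membership $b\in comm^2(a)$ is even more immediate than your appeal to $comm^2(a^2)\subseteq comm^2(a)$. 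One definitional caveat worth flagging: the paper only records the definition of the \emph{generalized} Hirano inverse, with $a^2-ab\in R^{qnil}$, whereas the theorem you are proving concerns the Hirano inverse proper, with $a^2-ab\in N(R)$; you correctly used the nilpotent version, which is what makes the equivalence with $a-a^3\in N(R)$ (rather than $a-a^3\in R^{qnil}$) hold.
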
 
\begin{theorem}(Theorem 2.1, \cite{her1})\label{kd2}\\
 If $a\in R$ has Hirano inverse, then $a$ has Drazin inverse.
\end{theorem}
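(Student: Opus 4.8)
The plan is to reduce everything to the nilpotency statement furnished by Theorem \ref{kd1} and then to exhibit the Drazin inverse explicitly as a polynomial in $a$. Since $a$ has a Hirano inverse, Theorem \ref{kd1} gives $a - a^3 \in N(R)$, so $(a-a^3)^m = 0$ for some positive integer $m$. Because $a$ commutes with $a^2$, I can factor $(a-a^3)^m = a^m(1-a^2)^m$, and hence $a^m(1-a^2)^m = 0$.

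Next I would extract a commuting ``partial inverse'' from this identity. Viewing $(1-t)^m$ as a polynomial in $t$, I write $(1-t)^m = 1 - t\,g(t)$ for a suitable integer-coefficient polynomial $g$; substituting $t = a^2$ gives $(1-a^2)^m = 1 - a^2 g(a^2)$. Multiplying out $a^m(1-a^2)^m = 0$ then yields $a^m = a^{m+2}g(a^2) = a^{m+1}y$, where $y := a\,g(a^2)$. The crucial structural point is that $y$ is a polynomial in $a$; in particular $y \in comm(a)$ and every expression built from $a$ and $y$ commutes with $a$. Iterating $a^m = a^{m+1}y$ produces the family of identities $a^m = a^{m+j}y^j$ for all $j \ge 0$, which is all I will need.

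I would then propose $x := a^m y^{m+1}$ as the Drazin inverse and check the three defining relations. The identity $ax = xa$ is immediate because $x$ is a polynomial in $a$. For the remaining two I use the single instance $a^m = a^{2m+1}y^{m+1}$ (the case $j = m+1$): it gives $xa^{m+1} = a^{2m+1}y^{m+1} = a^m$, and, writing $ax^2 = (a^{2m+1}y^{m+1})y^{m+1}$, it gives $ax^2 = a^m y^{m+1} = x$. Thus $x$ satisfies $xa^{m+1} = a^m$, $ax = xa$, and $ax^2 = x$, so $a \in R^d$ with Drazin index at most $m$; uniqueness is part of the definition of the Drazin inverse.

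The polynomial bookkeeping is routine, and I expect the only real subtlety to lie in the reduction itself: the definition of the Hirano inverse involves a \emph{quasinilpotent} element $a^2 - ab$, whereas the ordinary Drazin inverse requires a genuinely \emph{nilpotent} remainder. It is precisely Theorem \ref{kd1} that upgrades quasinilpotency to nilpotency of $a - a^3$, and invoking it is what makes the explicit construction above possible; a direct argument from the raw Hirano data $b = bab$, $b \in comm^2(a)$, $a^2 - ab \in R^{qnil}$ would in general only yield a generalized (Koliha) Drazin inverse. Hence the main obstacle is correctly leveraging Theorem \ref{kd1}, after which the construction of $x$ and the verification of its defining equations are mechanical.
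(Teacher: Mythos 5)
Your argument is correct, and it is worth noting at the outset that the paper itself offers no proof of this statement: Theorem \ref{kd2} is imported verbatim from Chen and Sheibani (Theorem 2.1 of the cited work), so there is no in-paper proof to match against. Your route is a legitimate standalone derivation: Theorem \ref{kd1} gives $(a-a^3)^m=0$, the factorization $a^m(1-a^2)^m=0$ and the polynomial identity $(1-t)^m=1-tg(t)$ yield $a^m=a^{m+1}y$ with $y=ag(a^2)$ a polynomial in $a$, and the explicit candidate $x=a^my^{m+1}$ verifiably satisfies $ax=xa$, $xa^{m+1}=a^m$, and $ax^2=x$; the computations check out, and uniqueness of an element satisfying these three equations is Drazin's classical theorem, so existence suffices. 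This differs from the proof in the cited source, which works directly with the Hirano inverse $b$ itself (using that $p=ab$ is a commuting idempotent and $a^2-p$ is nilpotent) and thereby obtains the stronger conclusion that $b$ \emph{is} the Drazin inverse of $a$; your construction only establishes existence of $a^d$ as some polynomial in $a$, which is all the statement asks for. Your closing remark about quasinilpotent versus nilpotent remainders is well taken, since the paper's preliminaries only define the \emph{generalized} Hirano inverse (with $R^{qnil}$) and never formally define the plain Hirano inverse; invoking Theorem \ref{kd1} as the nilpotency certificate is exactly the right way to sidestep that gap. The only cosmetic imprecision is the phrase ``uniqueness is part of the definition'' --- uniqueness is a theorem, not a hypothesis you get for free --- but it is standard and does not affect correctness.
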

\begin{corollary}(Corollary 2.8, \cite{her2})\label{co2.4}\\
 Let $a\in A$-banach algebra. Then, the followings are equivalent:
 \begin{enumerate}[(i)]
     \item  $a$ has generalized Hirano inverse,
     \item  There exists a unique idempotent element $p \in A$ such that $pa =ap$ and  $a^2-p \in A^{qnil}$.
 \end{enumerate}
\end{corollary}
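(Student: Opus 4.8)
The plan is to prove the two implications separately, exploiting the Peirce decomposition of $a$ along the idempotent together with the elementary Banach-algebra fact that the sum of the identity and a quasinilpotent element is invertible. The whole argument rests on recognizing that the condition $a^2-p\in A^{qnil}$ forces $p$ to be the spectral idempotent of $a^2$ at the isolated spectral value $1$.

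For $(i)\Rightarrow(ii)$ I would let $b$ be a generalized Hirano inverse of $a$ and set $p:=ab$. Since $b\in comm^2(a)$ and $a,a^2\in comm(a)$, the element $b$ commutes with both $a$ and $a^2$, so $p=ab=ba$. Then $p^2=abab=a(bab)=ab=p$ shows that $p$ is idempotent; $pa=aba=a(ba)=a(ab)=a^2b=ap$ shows $pa=ap$; and $a^2-p=a^2-ab\in A^{qnil}$ is exactly the defining quasinilpotency, giving (ii) except for uniqueness. For uniqueness I would argue that any two idempotents $p,q$ with $pa=ap$, $qa=aq$ and $a^2-p,\,a^2-q\in A^{qnil}$ both commute with $a^2$; on the corner $p(1-q)A\,p(1-q)$ the element $a^2$ would have to be simultaneously invertible (from $a^2-p\in A^{qnil}$) and quasinilpotent (from $a^2-q\in A^{qnil}$), forcing $p(1-q)=0$, and by symmetry $q(1-p)=0$, whence $p=q$.

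For $(ii)\Rightarrow(i)$ I would use the hypothesis $pa=ap$ to kill the cross terms $pa(1-p)=(1-p)ap=0$, so that $a=a_1+a_2$ with $a_1=pa=ap\in pAp$ and $a_2=(1-p)a=a(1-p)\in(1-p)A(1-p)$. A direct computation gives $a_1^2=a^2p$ and $a_2^2=a^2(1-p)$, hence $a_1^2-p=(a^2-p)p$ and $a_2^2=(a^2-p)(1-p)$ are quasinilpotent in the respective corner algebras. Consequently $a_1^2=p+(a^2-p)p$ is invertible in the Banach algebra $pAp$, so $a_1$ is invertible there, while $a_2$ is quasinilpotent. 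I then define $b:=a_1(a_1^2)^{-1}\in pAp$, the inverse of $a_1$ in $pAp$ extended by $0$ on the complementary corner. Since $a_2b=0$ one gets $ab=a_1b=p$, and therefore $bab=b(ab)=bp=b$ and $a^2-ab=a^2-p\in A^{qnil}$, which are two of the three defining conditions for a generalized Hirano inverse.

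The main obstacle is the remaining condition $b\in comm^2(a)$, since the hypothesis in (ii) only supplies $pa=ap$, not membership of $p$ in the double commutant. I would resolve this by identifying $p$ with the Riesz (spectral) idempotent of $a^2$ associated with the isolated spectral set $\{1\}$: the relation $a^2-p\in A^{qnil}$ forces $\sigma(a^2)\subseteq\{0,1\}$ with $p$ the projection onto the part of the spectrum equal to $1$. Such a spectral idempotent automatically lies in $comm^2(a^2)\subseteq comm^2(a)$, and $b$, being produced from $a_1$ by the holomorphic functional calculus of $a^2$ on a neighbourhood of $1$, inherits membership in the double commutant. This spectral-theoretic identification, available precisely because $A$ is a Banach algebra rather than an arbitrary ring, is where the real work lies; the algebraic verifications above are then routine.
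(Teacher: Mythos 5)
This corollary is quoted in the paper as a known preliminary (Corollary 2.8 of \cite{her2}); the paper itself gives no proof, so there is no in-house argument to compare yours against. Judged on its own terms, your proposal takes the natural route, and essentially the route of the original source: reduce to quasipolarity of $a^2$ and its spectral idempotent. The existence half of $(i)\Rightarrow(ii)$ and the construction of $b$ in $(ii)\Rightarrow(i)$ via the Peirce decomposition $a=ap+a(1-p)$ are correct, and you rightly identify the double-commutant condition $b\in comm^2(a)$ as the crux, resolving it by noting $\sigma(a^2)\subseteq\{0,1\}$ and producing $p$ and $b$ through the holomorphic functional calculus of $a^2$; since $comm(a)\subseteq comm(a^2)$ gives $comm^2(a^2)\subseteq comm^2(a)$, this does deliver $b\in comm^2(a)$.

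One step does not survive scrutiny as written: in your uniqueness argument you work ``on the corner $p(1-q)Ap(1-q)$,'' but $p(1-q)$ is not an idempotent unless you already know $pq=qp$, which is not given ($p$ and $q$ each commute with $a$, not a priori with each other), so that corner algebra does not exist. The lapse is harmless only because the Riesz-idempotent identification you invoke later already yields uniqueness canonically: since $a^2$ commutes with $p$, the resolvent $(\lambda-a^2)^{-1}$ respects the decomposition $1=p+(1-p)$, and the integral $\frac{1}{2\pi i}\oint_{|\lambda|=\varepsilon}(\lambda-a^2)^{-1}\,d\lambda$ evaluates to $1-p$ because $a^2p$ is invertible in $pAp$ while $a^2(1-p)$ is quasinilpotent in $(1-p)A(1-p)$; hence $p$ is determined by $a$ alone. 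You should drop the $p(1-q)$ argument and carry out this resolvent computation explicitly --- it is precisely the ``real work'' you allude to, and without it both the uniqueness in (ii) and the membership $b\in comm^2(a)$ in (i) remain unproven.
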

An element $a $ is said to be  {\it right pseudo core invertible} \cite{Gao}  if there exists a unique element $x\in R$ such that
$axa^{k}=a^k,~~ax^2=x, \text{ and } (ax)^*=ax,$
for some positive integer $k$, is denoted as  $\pc{a}_{r}$.  The least positive integer $k$ for which the above equations hold is called the {\it right pseudo core index}, and is denoted by $I(a).$ The set of all right pseudo core invertible elements of $R$ is denoted by $\pc{R}_r$. 
In 2019, Zhu \cite{Huihui} introduced the DMP inverse for an element  which is recalled next. Let $a \in R^{d} \cap R^{\dagger}$. Then any element $x$ satisfying $xax = x$, $xa = a^{d}a$, and $a^kx = a^ka^{\dagger}$ for
some positive integer $k$, is called  {\it DMP inverse} \cite{Huihui}  of $a$. It is unique, and is denoted by $a^{d,\dagger}.$
The smallest positive integer $k$ is called the {\it DMP index} of $a$. The set of all DMP invertible elements of $R$ is denoted by $R^{d,\dagger}.$

In 2016, Wang and Liu \cite{Wang2} proposed a new generalized inverse for matrices called  generalized Drazin (or G-Drazin) inverse, and is as follows. Let $A \in \mathbb{C}^{n\times n}$. Then, a matrix $X \in \mathbb{C}^{n\times n}$ is called {\it G-Drazin inverse} of $A$ if 
$$AXA = A,~XA^{k+1} = A^{k},  ~\text{and } A^{k+1}X = A^{k},$$
where $k = ind(A)$. It is denoted by $X=A^{GD}$. In general, this inverse is not unique.\par
Recently, in 2020, Hern\'andez {\it et al.} \cite{thome3} introduced another generalized inverse called {\it GDMP inverse}. The definition of a GDMP inverse of a matrix is stated next. Let $A \in\mathbb{ C}^{n\times n}$ and $k = ind(A)$. For each $A^{GD} \in A\{GD\}$, a GDMP inverse of $A$, denoted by $A^{GD\dagger}$, is the $n \times n$ matrix $A^{GD\dagger} = A^{GD}AA^{\dagger}$. This inverse is also not unique. Similarly, in 2021, Hern\'andez {\it et al.} \cite{Hernandez} introduced two new generalized inverses of rectangular complex matrices, namely 1MP and MP1-inverses, and  showed that the binary relations induced for these new generalized inverses are  partial orders. 

 If $a$ and $b$ are a pair of invertible elements, then $ab$ is also invertible and the inverse of the product $ab$ satisfying
 $$(ab)^{-1}=b^{-1}a^{-1},$$
 is called as the {\it reverse-order law}. On the other way, $$(ab)^{-1}=a^{-1}b^{-1}$$ is known as the {\it forward-order law}.
While the reverse-order law do not hold for different generalized inverses,  the forward-order law is not true even for invertible elements. The {\it additive property} of invertible elements $a$ and $b$ is 
$$(a+b)^{-1}=a^{-1}+b^{-1}.$$
Similarly, the {\it absorption law} for invertible elements $a$ and $b$ is
$$a^{-1}(a+b)b^{-1}=a^{-1}+b^{-1}.$$
In 1966, Greville \cite{Greville} first obtained sufficient conditions for which the reverse-order law holds for the Moore-Penrose inverse in matrix form, i.e., $(AB)^{\dagger}=B^{\dagger}A^{\dagger}$. Mosi\'c and Djordjevi\'c \cite{Mosic7} extended the reverse-order law involving the Moore-Penrose inverse in matrix setting to elements in ring. The same problem was also considered by several authors for other generalized inverses. For example, Deng \cite{Deng} studied the reverse-order law for the group inverse on Hilbert space. In 2012, Mosi\'c and Djordjevi\'c \cite{Mosic} extended the reverse-order law for the  group inverse in Hilbert space to ring. In 2017, Zhu {\it et al.} \cite{Chen20} discussed the reverse-order law for the inverse along an element. In 2019, Xu {\it et al.} \cite{last} studied the reverse-order law and the absorption  law for the $(b,c)$-inverses in rings. Jin and Benitez \cite{Jin24} proved the absorption law for  Moore-Penrose inverse, Drazin inverse, group inverse, core inverse and dual core inverse, respectively.  In 2021, Gao {\it et al.} \cite{Gao3} provided the reverse-order law, the forward-order law and the absorption law for the generalized core inverse. In 2017, Zhu and Chen \cite{Zhu25} provided the forward-order law  and additive property for the Drazin inverse in a ring. In 2021, Li {\it et al.} \cite{li} studied the forward-order law for the  core inverse in matrix setting. In 2022,  Kumar {\it et al.} \cite{Kumar} discussed several results on  additive properties,  reverse-order law and forward-order law for GD inverse and GDMP inverse of matrices. Zhu {\it et al.} (\cite{Chen19}, \cite{Chen20}, \cite{Chen22}) and Zou {\it et al.} \cite{Chen14} provided several results on  additive properties, reverse-order law and forward-order law.  Zhu {\it et al.} \cite{Chen19} obtained the following additive property of the Moore-Penrose inverse.
\begin{theorem}(Lemma 2.3, \cite{Chen19})\label{t2.9}\\
 Let $a, b \in R^{\dagger}$
such that $a^*b=ab^*=0$. Then $(a + b)^{\dagger} = a^{\dagger} + b^{\dagger}$.
\end{theorem}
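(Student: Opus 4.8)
The plan is to verify directly that $x := a^{\dagger} + b^{\dagger}$ satisfies the four defining Penrose equations for $a+b$, and then invoke the uniqueness of the Moore-Penrose inverse to conclude $(a+b)^{\dagger} = x$. The entire argument rests on first upgrading the two hypotheses $a^*b = 0$ and $ab^* = 0$ into a full set of orthogonality relations, both among $a,b$ themselves and among their Moore-Penrose inverses. The first half of this is immediate: since $*$ is an anti-isomorphism with $(a^*)^*=a$, applying it to $a^*b=0$ yields $b^*a=0$, and applying it to $ab^*=0$ yields $ba^*=0$.

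The crux is to translate these four star-orthogonality relations into the inverse-orthogonality relations $a^{\dagger}b = 0$, $b^{\dagger}a = 0$, $ab^{\dagger} = 0$, and $ba^{\dagger} = 0$. The device I would use is to rewrite $a^{\dagger}$ (and $b^{\dagger}$) so that a factor of $a^*$ (resp. $b^*$) is exposed and can absorb the relevant zero. Using that $aa^{\dagger}$ and $a^{\dagger}a$ are Hermitian, one has $a^{\dagger} = a^{\dagger}(aa^{\dagger})^* = a^{\dagger}(a^{\dagger})^*a^*$ and $a^{\dagger} = (a^{\dagger}a)^*a^{\dagger} = a^*(a^{\dagger})^*a^{\dagger}$, together with the analogous pair for $b$. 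These give at once $a^{\dagger}b = a^{\dagger}(a^{\dagger})^*a^*b = 0$, $b^{\dagger}a = b^{\dagger}(b^{\dagger})^*b^*a = 0$, $ab^{\dagger} = ab^*(b^{\dagger})^*b^{\dagger} = 0$, and $ba^{\dagger} = ba^*(a^{\dagger})^*a^{\dagger} = 0$. I expect this translation step to be the only genuinely nontrivial point; once it is in hand, everything else is bookkeeping.

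With the four cross terms vanishing, the two relevant products collapse to $(a+b)x = aa^{\dagger} + bb^{\dagger}$ and $x(a+b) = a^{\dagger}a + b^{\dagger}b$. From here the verification is routine. Equations $(3)$ and $(4)$ hold because each of $aa^{\dagger}, bb^{\dagger}, a^{\dagger}a, b^{\dagger}b$ is Hermitian, so both $(a+b)x$ and $x(a+b)$ are Hermitian. For $(1)$, expanding $(a+b)x(a+b) = aa^{\dagger}a + aa^{\dagger}b + bb^{\dagger}a + bb^{\dagger}b$, the two middle terms vanish via $a^{\dagger}b=0$ and $b^{\dagger}a=0$, while the outer terms give $a+b$; equation $(2)$ is entirely symmetric, the middle terms of $x(a+b)x$ vanishing via $ab^{\dagger}=0$ and $ba^{\dagger}=0$ and the outer terms recovering $a^{\dagger}+b^{\dagger}=x$. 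Since $x$ fulfills all four equations, uniqueness of the Moore-Penrose inverse forces $(a+b)^{\dagger} = a^{\dagger} + b^{\dagger}$, as claimed.
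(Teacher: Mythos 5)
Your proof is correct. The paper itself states this result only as a quoted preliminary (Lemma 2.3 of Zhu--Chen--Patr\'{\i}cio) without reproducing a proof, and your argument --- deriving $a^{\dagger}b=b^{\dagger}a=ab^{\dagger}=ba^{\dagger}=0$ from the factorizations $a^{\dagger}=a^{\dagger}(a^{\dagger})^{*}a^{*}=a^{*}(a^{\dagger})^{*}a^{\dagger}$ and then verifying the four Penrose equations for $a^{\dagger}+b^{\dagger}$ --- is exactly the standard verification and is sound in every step.
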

\noindent Very recently, Baksalary {\it et al.} \cite{bak18} established  necessary and sufficient conditions for two orthogonal projectors to be the Moore-Penrose additive.

This article  aims to study reverse-order law,  forward-order law, additive property, and absorption law for two new generalized inverses for elements in rings called weak Drazin (WD) inverse and weak Drazin Moore-Penrose (WDMP) inverse. \par
In this context, the article is organized as follows. First, we define WD and WDMP inverses which are  extensions of GD and GDMP inverses, respectively.  In Section \ref{sec3}, we illustrate some properties of WD and WDMP inverses. Also, we show that if WD and WDMP inverses  exist, then the right pseudo core inverse, Drazin inverse, DMP inverse and Hirano inverse exist. In Section \ref{sec4}, we establish the reverse-order law, the forward-order law and the additive property for
WD and WDMP inverse.
 We also propose a few 
 results assuming the additive property, reverse-order law and forward-order law hold for  WD inverse and WDMP inverse.
\section{WD inverse and WDMP inverse}\label{sec3}
In this section, we propose two new generalized inverses called WD inverse and WDMP inverse for elements in a ring, and discuss some of their properties. We then define a relation between a WDMP inverse and the right pseudo core inverse.  In this aspect,  we first introduce the definition of a weak Drazin inverse.
\begin{definition}\label{am1}
Let $a\in R$. If there exists an element $x\in R$ such that satisfies the following equations:
$$axa=a,~~a^{k+1}x=a^k, \text { and } xa^{k+1}=a^k,$$
then $x$ is called a {\it weak Drazin  inverse (WD inverse)} of $a$. It is denoted by $x=a^{WD}$ and $k=ind(a)$ is the {\it weak Drazin index} of $a$. The set of all weak Drazin invertible elements is denoted by $R^{WD}.$

\end{definition}
% The next example demonstrates the  existence and uniqueness of a WD inverse.
% \begin{example}\label{exa3.1}
% Let $R=\mathbb{Z}_{10}$ be a ring with conjugate involution and $2=a\in R$.  Then,   $a^{WD}=3$  and $a^{WD}=8.$  If $3=a\in R=\mathbb{Z}_{10}$ be a ring with conjugate involution, then  $a^{WD}=7.$ And if $R=\mathbb{Z}$ be a ring with conjugate involution and $0,1\neq a \in R$.  Then,  WD inverse of $a$ does not exist.  
% \end{example}

The definition of a WDMP inverse is motivated by the definition of a GDMP inverse of a matrix.
\begin{definition}\label{am2}
Let $a\in R^{WD}\cap R^{\dagger}$. If there exists an element $y\in R$ such that satisfies the following equations:
$$yay=y,~~ ay=aa^{\dagger}, \text{ and } ya^k=a^{WD}a^k,$$
then $y$ is called a {\it weak Drazin Moore-Penrose inverse (WDMP inverse)} of $a$. It is denoted by $y=a^{WD\dagger}$ and $k=ind(a)$ is the {\it weak Drazin  index} of $a$. The set of all weak Drazin Moore-Penrose invertible elements is denoted by $R^{WD\dagger}.$
\end{definition}
\begin{remark}
For an element $a\in R^{WD}\cap R^{\dagger}$ to be WDMP invertible, we need WD invertibility of $a.$ But,  WD inverse does not exist for all elements in rings. Hence, WDMP inverse does not exist for all elements in rings. For example, let $R=\mathbb{Z}$ be a ring with conjugate involution and $0,1\neq a \in R$.  Then, WDMP inverse of $a$ does not exist. 
\end{remark}

% \begin{example}
% Let $R=\mathbb{Z}_{10}$ be a ring with conjugate involution and $2=a\in R$. Then,  $a^{WD\dagger}=8$.
% \end{example}
A consequence of the above definition of a WD inverse is shown next as a corollary.

\begin{corollary}\label{c3.1}
Let $a\in R^{WD}.$ Then $a\in R^{d}.$
\end{corollary}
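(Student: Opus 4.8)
The plan is to extract from the defining equations of a WD inverse the two one-sided divisibility relations $a^k \in a^{k+1}R$ and $a^k \in R a^{k+1}$, and then to upgrade these to group invertibility of the power $a^k$, from which Drazin invertibility of $a$ follows by the standard correspondence $a \in R^d \iff a^k \in R^{\#}$.

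First I would let $x = a^{WD}$, so that $a^{k+1}x = a^k$ and $xa^{k+1} = a^k$ by Definition~\ref{am1}. Multiplying the first relation on the left by successive powers of $a$ gives $a^{k+j} = a^{k+j+1}x$ for every $j \ge 0$, and iterating these substitutions yields $a^k = a^{k+n}x^n$ for all $n \ge 1$; taking $n = k$ produces $a^k = a^{2k}x^k = (a^k)^2 x^k$. Symmetrically, the relation $xa^{k+1} = a^k$ gives $a^k = x^k a^{2k} = x^k (a^k)^2$. Hence, writing $b = a^k$ and $s = x^k$, we obtain $b \in b^2 R \cap R b^2$, which is precisely the criterion for $b = a^k$ to be group invertible. (The equation $axa = a$ is not needed for this conclusion.)

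Next I would set $p = (a^k)^{\#}$ and define the candidate Drazin inverse $a^d := a^{k-1}p$. The decisive point is that $p \in comm^2(a^k)$ while $a \in comm(a^k)$, so $p$ commutes with $a$; this commutation is what makes the construction work and is the step I expect to carry the real content. Using $ap = pa$ together with the group-inverse identities $a^k p a^k = a^k$, $p a^k p = p$ and $a^k p = p a^k$, one checks the Drazin axioms in turn: $a a^d = a^k p = a^d a$, then $a^d a a^d = a^{k-1}(a^k p^2) = a^{k-1}p = a^d$ (since $a^k p^2 = p a^k p = p$), and finally $a^{k+1}a^d = a^{2k}p = a^k p a^k = a^k$. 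Because $a^d$ commutes with $a$, these relations are equivalent to the defining equations $xa^{k+1}=a^k$, $ax=xa$, $ax^2=x$ of the preliminaries, and the verifications are routine once $ap = pa$ is in hand.

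Thus $a$ is Drazin invertible and $a \in R^d$, with uniqueness the one already recorded. The main obstacle is therefore not the algebra but the two structural facts — that $a^k \in (a^k)^2 R \cap R(a^k)^2$ forces $a^k \in R^{\#}$, and that the double-commutant membership of $(a^k)^{\#}$ transfers commutativity with $a$ — both of which are classical; alternatively, one may bypass the explicit construction entirely by appealing to the well-known equivalence between strong $\pi$-regularity ($a^k \in a^{k+1}R \cap Ra^{k+1}$) and Drazin invertibility.
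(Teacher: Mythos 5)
Your proposal is correct and follows essentially the same route as the paper: both extract from $a^{k+1}a^{WD}=a^k$ and $a^{WD}a^{k+1}=a^k$ the strong $\pi$-regularity condition $a^k\in a^{k+1}R\cap Ra^{k+1}$ and then invoke its well-known equivalence with Drazin invertibility (the paper simply cites this fact, with a small typo writing $a$ where $a^k$ is meant). Your additional explicit construction of $a^d=a^{k-1}(a^k)^{\#}$ via group invertibility of $a^k$ is a correct, self-contained expansion of the step the paper leaves as ``well-known,'' not a different argument.
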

\begin{proof}
We have $a\in R^{WD}$. So, $aa^{WD}a=a$, $a^{WD}a^{k+1}=a^k$ and $a^{k+1}a^{WD}=a^k$ imply that $a\in a^{k+1}R\cap Ra^{k+1}$. It is well-known that if $a\in a^{k+1}R\cap Ra^{k+1}$, then $a\in R^d.$ Hence, $a$ is Drazin invertible.
\end{proof}
Every idempotent element is weak Drazin invertible, and is proved in the next result.
\begin{theorem}\label{dev}
Let $p\in R$ be an idempotent element. Then, $p$ is  WD invertible. Moreover, $p^{WD}=p.$
\end{theorem}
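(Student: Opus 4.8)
The plan is to prove the claim by directly exhibiting $x = p$ as a WD inverse of $p$ with weak Drazin index $k = 1$, so that the three equations of Definition \ref{am1} are satisfied. The first ingredient I would establish is the elementary power identity for idempotents: since $p^2 = p$, a one-line induction gives $p^n = p$ for every positive integer $n$. This collapses all the powers appearing in the defining equations down to $p$ itself, which is what makes the verification immediate.

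Next I would check the three equations of Definition \ref{am1} with $a = p$, $x = p$, and $k = 1$. The product condition $pxp = p$ becomes $p\cdot p\cdot p = p^3 = p$; the right condition $p^{k+1}x = p^k$ becomes $p^{2}p = p^3 = p = p^1$; and the left condition $x p^{k+1} = p^k$ becomes $p\,p^2 = p^3 = p = p^1$. All three reduce to the identity $p^3 = p$, which holds by the power rule above. Hence $p \in R^{WD}$ and $p$ serves as its own WD inverse, that is, $p^{WD} = p$.

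There is essentially no genuine obstacle here: the statement is a short computation once the idempotent power rule is noted, and the only mild point to register is that the smallest admissible weak Drazin index is $k = 1$, so no larger exponent need be considered. Since a WD inverse is not asserted to be unique in Definition \ref{am1}, it suffices to exhibit one such element $x$, and $x = p$ is the natural candidate; no appeal to the surrounding theory (for instance Corollary \ref{c3.1}) is required.
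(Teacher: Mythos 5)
Your proposal is correct and follows essentially the same route as the paper: both verify directly that $x=p$ satisfies the three defining equations of Definition \ref{am1} by reducing all powers of $p$ to $p$ itself via $p^2=p$. The only cosmetic difference is that you fix $k=1$ while the paper notes the equations hold for every positive integer $k$; either suffices.
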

\begin{proof}
We know that $p^2=p$. Now, $ppp=p^2p=p^2=p$. Similarly, $pp^{k+1}=p^{k}$ and $p^{k+1}p= p$ for every positive integer $k.$ Hence, $p$ is  WD invertible and every idempotent element is a self WD inverse. 
\end{proof}
% The next example is in the direction of Theorem \ref{dev}.
% \begin{example}
% Let $R=\dfrac{\mathbb{Z}_2{<\negmedspace x, y \negmedspace >}}{{<\negmedspace x^2-x, y^2-y \negmedspace>}}$ be the ring generated over $\mathbb{Z}_2$ by ${<\negmedspace x, y \negmedspace >}$ with relations ${{<\negmedspace x^2-x, y^2-y \negmedspace>}}$  and
% $a=x+< \negmedspace x^2-x,y^2-y \negmedspace >$, $b=y+<\negmedspace x^2-x,y^2-y \negmedspace > \in R$ with conjugate involution. Now, $a^2=(x+< \negmedspace x^2-x,y^2-y \negmedspace >)(x+< \negmedspace x^2-x,y^2-y \negmedspace >)=x^2+< \negmedspace x^2-x,y^2-y \negmedspace >=x^2+x-x+< \negmedspace x^2-x,y^2-y \negmedspace >=x+< \negmedspace x^2-x,y^2-y \negmedspace >=a$. Similarly, $b^2=b.$ Both $a$ and $b$ are idempotent elements. Thus, $a$ and $b$ are self WD invertibles.

% \end{example}

\begin{remark}
Since $aa^{\dagger}$ is an idempotent element, so $(aa^{\dagger})^{WD}=aa^{\dagger}.$
\end{remark}

\begin{theorem}\label{ad5}
If $a\in R^{WD}$, then there exist two idempotent elements say $p$ and $q$ such that
$$axa=a,~ a^kp=0, \text{ and } qa^k=0.$$
\end{theorem}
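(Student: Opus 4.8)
The plan is to exhibit the two idempotents explicitly from the defining relations of the WD inverse, so that the statement reduces to a direct verification rather than any abstract existence argument. Write $x = a^{WD}$, so that $axa = a$, $a^{k+1}x = a^k$, and $xa^{k+1} = a^k$ with $k = ind(a)$. The first of these is already the identity $axa = a$ demanded in the conclusion, so the real task is only to produce idempotents carrying the one-sided annihilation conditions $a^k p = 0$ and $q a^k = 0$.

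The natural candidates are the complementary elements $p = 1 - ax$ and $q = 1 - xa$. The first step is to confirm that $ax$ and $xa$ are idempotent: right-multiplying $axa = a$ by $x$ yields $axax = ax$, and left-multiplying by $x$ yields $xaxa = xa$. Hence $p^2 = (1-ax)^2 = 1 - 2ax + axax = 1 - ax = p$, and symmetrically $q^2 = q$, so both $p$ and $q$ are genuine idempotents.

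The second step is to check the annihilation identities. For $p$, I would compute $a^k p = a^k(1 - ax) = a^k - a^{k+1}x$, which collapses to $0$ by the relation $a^{k+1}x = a^k$. Symmetrically, $q a^k = (1 - xa)a^k = a^k - x a^{k+1} = 0$ using $x a^{k+1} = a^k$. Together with the inherited equation $axa = a$, this furnishes the required $p$ and $q$.

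Since every step is a single substitution driven directly by the defining equations, I do not expect a genuine obstacle here. The only point worth a moment's care is the bookkeeping: the two ``weak'' relations $a^{k+1}x = a^k$ and $xa^{k+1} = a^k$ are exactly what is needed to kill $a^k$ against $1 - ax$ on the \emph{right} and against $1 - xa$ on the \emph{left}, so one must match the one-sided nature of each WD condition to the correspondingly sided annihilation in the conclusion.
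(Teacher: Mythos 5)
Your proposal is correct and follows essentially the same route as the paper: both take $p=1-ax$ and $q=1-xa$, derive their idempotency from $axa=a$, and obtain $a^kp=0$ and $qa^k=0$ directly from the relations $a^{k+1}x=a^k$ and $xa^{k+1}=a^k$. No gaps; the verification is complete.
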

\begin{proof}
From Definition \ref{am1}, we have
\begin{align}\label{eq3.1}
    axa=a,
   \end{align}
   \vspace{-1.2cm}
   \begin{align}\label{eq3.2}
    a^{k+1}x=a^k,
   \end{align}
   \vspace{-1.2cm}
   \begin{align}\label{eq3.3}
    xa^{k+1}=a^k.
   \end{align}
 Now, $(ax)^2=axax=ax$ by \eqref{eq3.1}, we have $(ax)^2=ax$, so it is an idempotent element. Similarly, $(xa)^2=xa$ is idempotent. Hence, $p=1-ax$ and $q=1-xa$  are also  idempotent. From \eqref{eq3.2}, we have $a^{k+1}x=a^k$,  which implies  $a^k(1-ax)=0$. So, $a^kp=0$. Similarly, $qa^k=0.$
\end{proof}
We show that $a^{WD}aa^{\dagger}$ is a solution of the following equations.
\begin{theorem}\label{ad1}
Let $a\in R$ with involution $*.$ If $a\in R^{WD\dagger}$, then $y=a^{WD}aa^{\dagger}$ is a solution of these equations:
$$yay=y,~~ ay=aa^{\dagger}, \text{ and } ya^k=a^{WD}a^k.$$
\end{theorem}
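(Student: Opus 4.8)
The plan is to verify the three defining equations directly by substituting $y=a^{WD}aa^{\dagger}$ and repeatedly applying two ``outer cancellation'' identities: the WD relation $aa^{WD}a=a$ from Definition~\ref{am1}, and the Moore--Penrose relations $aa^{\dagger}a=a$ and $a^{\dagger}aa^{\dagger}=a^{\dagger}$. No deeper structural result is needed; everything reduces to careful bookkeeping of the order of the noncommutative products.

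First I would dispose of the middle equation $ay=aa^{\dagger}$, which is the cleanest and is convenient to have on hand: $ay=aa^{WD}aa^{\dagger}=(aa^{WD}a)a^{\dagger}=aa^{\dagger}$, using only $aa^{WD}a=a$. Next, for $yay=y$, I would first note $ya=a^{WD}aa^{\dagger}a=a^{WD}a$ via $aa^{\dagger}a=a$, and then compute $yay=(ya)y=a^{WD}a\cdot a^{WD}aa^{\dagger}=a^{WD}(aa^{WD}a)a^{\dagger}=a^{WD}aa^{\dagger}=y$, again by $aa^{WD}a=a$. (Equivalently, one can post-multiply the already-proved identity $ay=aa^{\dagger}$: $yay=y(ay)=a^{WD}aa^{\dagger}\cdot aa^{\dagger}=a^{WD}a(a^{\dagger}aa^{\dagger})=a^{WD}aa^{\dagger}=y$.) Finally, for $ya^{k}=a^{WD}a^{k}$, I would factor $ya^{k}=a^{WD}aa^{\dagger}a^{k}=a^{WD}(aa^{\dagger}a)a^{k-1}=a^{WD}a\cdot a^{k-1}=a^{WD}a^{k}$, where the absorption $aa^{\dagger}a^{k}=a^{k}$ is just $aa^{\dagger}a=a$ applied to the leading two factors of $a^{k}$.

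The only real obstacle is organizational rather than conceptual: in a noncommutative ring one must invoke $aa^{WD}a=a$ versus $aa^{\dagger}a=a$ at exactly the right spot and keep the factors in order. The one point I would flag explicitly is that the third equation tacitly uses $k\ge 1$, since it relies on peeling the leading $aa^{\dagger}a$ block off $a^{k}$; this is automatic because the weak Drazin index $k=\mathrm{ind}(a)$ is a positive integer.
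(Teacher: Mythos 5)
Your proposal is correct and follows essentially the same route as the paper: direct substitution of $y=a^{WD}aa^{\dagger}$ into the three equations, using only $aa^{WD}a=a$, $aa^{\dagger}a=a$, and $a^{\dagger}aa^{\dagger}=a^{\dagger}$ to cancel factors. The only difference is cosmetic (you simplify $ya$ first in the second equation, whereas the paper absorbs $aa^{WD}a$ in the middle of the full product), and your remark that $k\ge 1$ is needed for the third equation is a fair, if minor, point the paper leaves implicit.
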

\begin{proof} Putting $y=a^{WD}aa^{\dagger}$ in equations $yay=y,~~ ay=aa^{\dagger}, \text{ and } ya^k=a^{WD}a^k$, we get
\begin{align}\label{aj1}
    a^{WD}aa^{\dagger}aa^{WD}aa^{\dagger}
    &=a^{WD}aa^{\dagger}(aa^{WD}a)a^{\dagger}\nonumber\\
    &=a^{WD}aa^{\dagger}aa^{\dagger}\nonumber\\
    &=a^{WD}aa^{\dagger},
\end{align}
\vspace{-0.8cm}
\begin{align}
    aa^{WD}aa^{\dagger}&=(aa^{WD}a)a^{\dagger}=aa^{\dagger},\label{aj2}
    \end{align}
    and
    \begin{align}
    a^{WD}aa^{\dagger}a^k&=a^{WD}(aa^{\dagger}a)a^{k-1}=a^{WD}aa^{k-1}=a^{WD}a^k,\label{aj3}
\end{align}
respectively.
From  \eqref{aj1}, \eqref{aj2} and \eqref{aj3}, we can say that $y=a^{WD}aa^{\dagger}$ is a solution of this system of equations.
\end{proof}
From Corollary \ref{c3.1} and Definition \ref{am2}, we can say if $a$ is  WDMP invertible, then it is DMP invertible. 
\begin{corollary}
Let $a\in R^{WD\dagger}.$ Then, $a\in R^{d\dagger}.$
\end{corollary}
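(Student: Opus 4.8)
The plan is to reduce the claim to the preceding Corollary together with the standard fact that a DMP inverse exists for every element of $R^d\cap R^\dagger$. So the whole statement is really a short deduction from Corollary \ref{c3.1}, and the only computational content is verifying a candidate DMP inverse.

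First I would unwind the definitions. By Definition \ref{am2}, the hypothesis $a\in R^{WD\dagger}$ presupposes $a\in R^{WD}\cap R^\dagger$; in particular $a\in R^{WD}$ and $a\in R^\dagger$. Applying Corollary \ref{c3.1} to $a\in R^{WD}$ gives $a\in R^d$. Hence $a\in R^d\cap R^\dagger$, which is exactly the standing hypothesis in Zhu's definition of the DMP inverse.

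It then remains to exhibit a DMP inverse, i.e. an element $x$ with $xax=x$, $xa=a^d a$, and $a^k x=a^k a^\dagger$ for $k=i(a)$. The natural candidate is $x=a^d a a^\dagger$. I would verify the three equations using only the identities $aa^\dagger a=a$, $a^d a a^d=a^d$, $aa^d=a^d a$, and $a^{k+1}a^d=a^k$ (valid for $k\geq i(a)$): for the second equation, $xa=a^d(aa^\dagger a)=a^d a$; for the third, $a^k x=(a^k a^d a)a^\dagger=(a^{k+1}a^d)a^\dagger=a^k a^\dagger$; and for the first, reusing $a^d a a^\dagger a=a^d a$ one gets $xax=(a^d a)(a^d a a^\dagger)=(a^d a a^d)aa^\dagger=a^d a a^\dagger=x$. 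By the uniqueness asserted in the definition, $a^{d,\dagger}=a^d a a^\dagger$, so $a$ is DMP invertible, i.e. $a\in R^{d,\dagger}$.

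The argument has essentially no hard step, since the substance lives entirely in Corollary \ref{c3.1}. The only point demanding care is the verification that $a^d a a^\dagger$ meets the DMP axioms: one must keep track of the index condition $k\geq i(a)$ when collapsing $a^{k+1}a^d$ to $a^k$, and ensure each manipulation uses a genuine Drazin/Moore--Penrose identity rather than the false relation $a a^d a=a$. Alternatively, one could simply cite the known characterization that an element is DMP invertible precisely when it lies in $R^d\cap R^\dagger$, and omit the explicit verification.
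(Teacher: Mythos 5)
Your proposal is correct and follows essentially the same route as the paper: unwind Definition \ref{am2} to get $a\in R^{WD}\cap R^{\dagger}$, invoke Corollary \ref{c3.1} for $a\in R^{d}$, and then take $x=a^{d}aa^{\dagger}$ as the DMP inverse. The only difference is that you explicitly verify the three DMP equations for this candidate, whereas the paper simply asserts the existence of $x=a^{d}aa^{\dagger}$ without checking them — so your write-up is, if anything, more complete.
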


\begin{proof}
If $a\in R^{WD\dagger}$, then $a\in R^{WD}$ and $a\in R^{\dagger}$ by Definition \ref{am2}. From Corollary \ref{c3.1}, $a\in R^d$. Therefore, $a\in R^d\cap R^{\dagger}$, i.e., there exists an element $x\in R$ such that $x=a^daa^{\dagger}.$ Hence, $a\in R^{d\dagger}.$
\end{proof}
Now, we prove some  properties of a WDMP inverse with the help of Definition \ref{am2} and Theorem \ref{ad1}.
\begin{lemma}\label{ad2}
Let $a\in R^{WD\dagger}$ and $y$ be a WDMP inverse of $a$. Then, the following conditions hold:
\begin{enumerate}[(i)]
    \item $aya^k=a^k$, for every positive integer $k.$
    \item $ay$ and $ya$ are both  idempotent elements.
    \item there exists an idempotent element $p$ such that $pa^k=0$.
    \item $y(ay)^k=y,$ for every positive integer $k.$
    \item $a^{k+1}ya=a^{k+1}$, where $k=ind(a)$.
   \item  $ya^{k+1}y=a^ka^{\dagger}$, where $k=ind(a)$.
   \item $a^{\dagger}ay=a^{\dagger}.$
\end{enumerate}
\end{lemma}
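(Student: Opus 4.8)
The plan is to verify each of the seven identities by direct substitution, drawing only on the three defining equations of a WDMP inverse ($yay=y$, $ay=aa^{\dagger}$, and $ya^k=a^{WD}a^k$ with $k=ind(a)$), the Moore--Penrose axioms $aa^{\dagger}a=a$ and $a^{\dagger}aa^{\dagger}=a^{\dagger}$, and the weak Drazin relations $a^{WD}a^{k+1}=a^k$ from Definition \ref{am1}. The single most useful observation is that $ay=aa^{\dagger}$ acts like a left identity on every power of $a$: since $aa^{\dagger}a=a$, one gets $aya^k=aa^{\dagger}a^k=(aa^{\dagger}a)a^{k-1}=a^k$, which is exactly (i). I would establish (i) first, since several later parts lean on it.

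For (ii), idempotency of $ay=aa^{\dagger}$ is immediate from $aa^{\dagger}aa^{\dagger}=(aa^{\dagger}a)a^{\dagger}=aa^{\dagger}$, while $(ya)^2=y(ay)a=y(aa^{\dagger}a)=ya$ gives idempotency of $ya$. Part (iii) then follows by taking $p=1-ay=1-aa^{\dagger}$, which is idempotent by (ii) and satisfies $pa^k=a^k-aya^k=0$ by (i). Part (iv) uses only that $ay$ is idempotent, so $(ay)^k=ay$ and hence $y(ay)^k=y(ay)=yay=y$ by the first defining equation.

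The parts that genuinely invoke the weak Drazin structure are (v) and (vi). For (v) I would again exploit $ay=aa^{\dagger}$, writing $a^{k+1}ya=a^k(ay)a=a^k(aa^{\dagger}a)=a^{k+1}$, which settles it in one line. For (vi) the crucial intermediate identity is $ya^{k+1}=a^k$, obtained by chaining the third defining equation with the weak Drazin relation: $ya^{k+1}=(ya^k)a=(a^{WD}a^k)a=a^{WD}a^{k+1}=a^k$. Feeding this in yields $ya^{k+1}y=a^ky=a^{k-1}(ay)=a^{k-1}aa^{\dagger}=a^ka^{\dagger}$. Finally (vii) is the one-liner $a^{\dagger}ay=a^{\dagger}(aa^{\dagger})=a^{\dagger}$ by the second Moore--Penrose axiom.

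I do not expect a genuine obstacle here; the difficulty is purely bookkeeping about which identity to apply and in what order. The one place demanding a little care is the intermediate identity $ya^{k+1}=a^k$ used in (vi): it is obtained by matching the exponent in the defining equation $ya^k=a^{WD}a^k$ to the weak Drazin relation $a^{WD}a^{k+1}=a^k$, and both of these hold precisely because $k$ is the weak Drazin index. This explains why (vi) genuinely rests on the hypothesis $k=ind(a)$, whereas (i) and (iv) hold for every positive integer and (v) in fact follows from $ay=aa^{\dagger}$ alone. I would flag this distinction explicitly so the reader sees exactly where the index hypothesis is needed.
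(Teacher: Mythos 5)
Your proof is correct, and it follows the direct-verification route that the paper evidently intends (the paper itself omits the proof of this lemma): every step — $aya^k=aa^{\dagger}a^k=a^k$, the idempotency computations, $p=1-ay$, and the key chain $ya^{k+1}=(a^{WD}a^k)a=a^{WD}a^{k+1}=a^k$ for part (vi) — checks out against Definitions \ref{am1} and \ref{am2} and the Moore--Penrose axioms. Your remark about exactly where the hypothesis $k=ind(a)$ is needed (part (vi)) versus where the identities hold for all positive integers is accurate and a useful clarification.
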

% \begin{proof}
% \begin{enumerate}[(i)]
%     \item From the 3rd property of Definition \ref{am2} and the first property of Definition \ref{am1}, we obtain $ya^k=a^{WD}a^k$ and $aa^{WD}a=a$. Now, $aya^k=aa^{WD}a^k=(aa^{WD}a)a^{k-1}=a^k.$
%     \item From the 2nd property of Definition \ref{am2}, we have $ay=aa^{\dagger}$. Now, $(ay)^2=aa^{\dagger}aa^{\dagger}=aa^{\dagger}=ay$. Hence, $ay$ is an idempotent element. Similarly, $ya$ is an idempotent element.
%     \item From the above part (i), we obtain $aya^k=a^k$. Then, $aya^k-a^k=0$ implies  $(1-ay)a^k=0$. By the above part (ii), we know that $ay$ is an  idempotent element. So, $1-ay$ is also an idempotent element. So, we can say $pa^k=0$, where $p=1-ay.$
%     \item From  Definition \ref{am2}, we get $yay=y$ and from part (ii) we know that $(ay)$ is an idempotent element. Hence, $(ay)^k=ay$  for every positive integer $k$.  So,  $y(ay)^k=yay=y$.
%     \item From Theorem \ref{ad1}, we have $y=a^{WD}aa^{\dagger}.$ So,  $$a^{k+1}ya=a^{k+1}a^{WD}aa^{\dagger}a=a^{k+1}a^{WD}a=a^{k+1}.$$
%     \item From Theorem \ref{ad1}, we obtain $y=a^{WD}aa^{\dagger}$. Hence, $$a^{WD}aa^{\dagger}a^{k+1}a^{WD}aa^{\dagger}=a^{WD}a^{k+1}a^{WD}aa^{\dagger}=a^ka^{WD}aa^{\dagger}=a^ka^{\dagger}.$$
%     \item $a^{\dagger}ay=a^{\dagger}aa^{WD}aa^{\dagger}=a^{\dagger}aa^{\dagger}=a^{\dagger}.$
%  \end{enumerate}
% \end{proof}

\begin{theorem}\label{ad3}
If $a\in R^{WD\dagger}$, then a is  right pseudo 
core invertible. Moreover, WDMP inverse of $a$ is the right pseudo core inverse of $a.$
\end{theorem}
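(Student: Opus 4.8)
The plan is to exhibit a concrete WDMP inverse of $a$ and then check, one equation at a time, that it satisfies the three defining relations of the right pseudo core inverse; the uniqueness clause built into the definition of $\pc{a}_{r}$ will then do the rest. Concretely, I would take $y=a^{WD}aa^{\dagger}$, which is a WDMP inverse of $a$ by Theorem \ref{ad1}, and verify that $x=y$ solves $axa^{k}=a^{k}$, $ax^{2}=x$, and $(ax)^{*}=ax$, where $k=ind(a)$. Because $a\in R^{WD\dagger}$ forces $a\in R^{\dagger}$ and, by Corollary \ref{c3.1}, $a\in R^{d}$, all the ingredients I need are on the table: $aa^{\dagger}a=a$, the Hermitian symmetry $(aa^{\dagger})^{*}=aa^{\dagger}$, the WD power-relations, and the properties collected in Lemma \ref{ad2}.

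Two of the three relations are essentially free. For the Hermitian condition, the defining equation $ay=aa^{\dagger}$ of a WDMP inverse gives $(ay)^{*}=(aa^{\dagger})^{*}=aa^{\dagger}=ay$, using Moore--Penrose axiom $(3.)$. For the first relation I would invoke Lemma \ref{ad2}(i), which states $aya^{k}=a^{k}$; alternatively it is immediate from $ay=aa^{\dagger}$ together with $aa^{\dagger}a=a$, since $aya^{k}=aa^{\dagger}a\,a^{k-1}=a\,a^{k-1}=a^{k}$.

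The whole weight of the proof therefore falls on the outer relation $ay^{2}=y$. Here I would first simplify using $ay=aa^{\dagger}$:
\[
ay^{2}=(ay)y=aa^{\dagger}y,
\]
so that the claim $ay^{2}=y$ is equivalent to the left-absorption identity $aa^{\dagger}y=y$, that is, $aa^{\dagger}a^{WD}aa^{\dagger}=a^{WD}aa^{\dagger}$. The natural route is to show that $y$ is a left multiple of $a$: if $y=aw$ for some $w\in R$, then $aa^{\dagger}y=aa^{\dagger}aw=aw=y$ by $aa^{\dagger}a=a$, and we are done. Since $y=a^{WD}aa^{\dagger}$, this reduces to proving $a^{WD}a\in aR$, equivalently $aa^{\dagger}(a^{WD}a)=a^{WD}a$.

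This last step is where I expect the real difficulty to lie, and where the WD hypotheses must enter in an essential way. The power-relations $a^{WD}a^{k+1}=a^{k}$ and $a^{k+1}a^{WD}=a^{k}$ only control how $a^{WD}a$ interacts with the high power $a^{k}$ (indeed they yield $(a^{WD}a)a^{k}=a^{k}$), whereas what I need is its interaction with the Hermitian projector $aa^{\dagger}$; and the inner relation $aa^{WD}a=a$ by itself makes $a^{WD}a$ only an idempotent satisfying $a(a^{WD}a)=a$, without forcing its range into $aR$. To bridge this gap I would try to re-express $a^{WD}a$ through the Drazin inverse furnished by Corollary \ref{c3.1}, using the power-relations to transfer information from $a^{k}$ down to $a$, and combine this with Lemma \ref{ad2}(v)--(vii). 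Once $aa^{\dagger}y=y$ is secured, the three right-pseudo-core equations all hold for $x=y$, and the uniqueness of $\pc{a}_{r}$ in its definition identifies $y=a^{WD}aa^{\dagger}$ with the right pseudo core inverse of $a$, establishing both assertions at once.
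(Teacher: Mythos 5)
Your reduction is set up correctly: the equations $(ay)^{*}=ay$ and $aya^{k}=a^{k}$ do follow at once from $ay=aa^{\dagger}$, and the theorem does hinge entirely on $ay^{2}=y$, which you correctly rewrite as $aa^{\dagger}y=y$, i.e.\ $y\in aR$, i.e.\ $a^{WD}a\in aR$. The problem is that the proposal stops precisely there: you say you \emph{would try} to obtain $a^{WD}a\in aR$ by passing through the Drazin inverse and Lemma \ref{ad2}(v)--(vii), but no derivation is given, and none of those items supplies it --- they control $a^{\dagger}ay$, $a^{k+1}ya$ and $ya^{k+1}y$, not $aa^{\dagger}y$. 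So the argument is incomplete at its only nontrivial step.

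Moreover, the missing step cannot be recovered from the hypotheses you allow yourself. The relations $a^{k+1}a^{WD}=a^{k}$ and $a^{WD}a^{k+1}=a^{k}$ constrain $a^{WD}$ only against the power $a^{k}$, and $aa^{WD}a=a$ makes $a^{WD}a$ an idempotent with $a(a^{WD}a)=a$ but does not place it in $aR$. Take $R=\mathbb{C}^{2\times2}$ with conjugate-transpose involution and
\[
a=\begin{pmatrix}0&1\\0&0\end{pmatrix},\qquad a^{\dagger}=a^{*}=\begin{pmatrix}0&0\\1&0\end{pmatrix}.
\]
Since $a^{2}=0$, the power conditions with $k=\mathrm{ind}(a)=2$ are vacuous and $aa^{*}a=a$, so $a^{WD}=a^{*}$ is a legitimate WD inverse; then $y=a^{WD}aa^{\dagger}=a^{*}$ satisfies all three WDMP equations, yet $ay^{2}=a(a^{*})^{2}=0\neq y$, and $a^{WD}a=\left(\begin{smallmatrix}0&0\\0&1\end{smallmatrix}\right)\notin aR$. (One checks directly that the unique right pseudo core inverse of this $a$ is $0$, while no WDMP inverse of $a$ is $0$.) So either the statement needs hypotheses beyond Definitions \ref{am1}--\ref{am2}, or a genuinely different idea is required; in any case the outline as written does not close, and the point at which you hesitated is exactly where it breaks.
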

% \begin{proof}
% We know that $a\in R^{WD\dagger}$. So, by   Definition \ref{am2}, we have $yay=y$, $ay=aa^{\dagger}$. So, $(ay)^*=(aa^{\dagger})^*=aa^{\dagger}=ay$. From Lemma \ref{ad2} (i), we have $aya^k=a^k.$ Hence, we have $yay=y$, $(ay)^*=ay$ and $aya^{k}=a^k.$ So, $a$ is right pseudo core invertible. Thus, $y=a^{WD}aa^{\dagger}$ is the right pseudo core inverse of $a$.
% \end{proof}

If $a$ is Hermitian, then the following properties hold.

\begin{theorem}
If $a\in R$ is Hermitian and $y$ be a WDMP inverse of $a$, then the following conditions hold:
\begin{enumerate}[(i)]
    \item $a^{\dagger}ay$ is the group inverse of $a.$
    \item $a^2y=a.$
    \item $y^2=a^{WD}a^{\dagger}=ya^{\dagger}.$
    \item $a^{WD}a$ is a WDMP inverse of  $a^{\dagger}a.$
    \item $aa^{WD}(aa^{\dagger})^{k+1}=aa^{\dagger}.$
\end{enumerate}
\end{theorem}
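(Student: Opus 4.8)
The foundation of the argument is the reduction \emph{Hermitian $\Rightarrow$ EP}, so the plan is to establish that first. For Hermitian $a$ one checks that $(a^{\dagger})^{*}$ satisfies the four Moore--Penrose equations for $a^{*}=a$, and by uniqueness of the Moore--Penrose inverse concludes $(a^{\dagger})^{*}=a^{\dagger}$; hence $aa^{\dagger}=(aa^{\dagger})^{*}=(a^{\dagger})^{*}a^{*}=a^{\dagger}a$. Thus $a$ and $a^{\dagger}$ commute, $a^{\dagger}$ fulfils the defining equations of the group inverse, so $a^{\#}=a^{\dagger}$ and $ind(a)=1$; in particular $a^{2}a^{\dagger}=a(aa^{\dagger})=a(a^{\dagger}a)=(aa^{\dagger})a=a$. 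I expect this reduction to be the main conceptual step: commutativity of $a$ and $a^{\dagger}$ is exactly what lets every power manipulation below go through, and once it is in hand each item is a short computation built on Definition \ref{am2} and Lemma \ref{ad2}.

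For (i) I would simply invoke Lemma \ref{ad2}(vii), which already gives $a^{\dagger}ay=a^{\dagger}$; since $a^{\dagger}=a^{\#}$ by the first paragraph, $a^{\dagger}ay$ is the group inverse of $a$. For (ii) I would use $ay=aa^{\dagger}$ from Definition \ref{am2} to write $a^{2}y=a(ay)=a\,aa^{\dagger}=a^{2}a^{\dagger}$, which equals $a$ by the EP identity above. For (iii) the plan is to first record two auxiliary identities: $yaa^{\dagger}=y(ay)=yay=y$ (from $ay=aa^{\dagger}$ and $yay=y$), and $aa^{\dagger}y=a^{\dagger}$ (from Lemma \ref{ad2}(vii) together with $a^{\dagger}a=aa^{\dagger}$). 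Substituting the second into $ya^{\dagger}$ and then applying the first yields $ya^{\dagger}=y(aa^{\dagger}y)=(yaa^{\dagger})y=y^{2}$, giving $y^{2}=ya^{\dagger}$. To reach $y^{2}=a^{WD}a^{\dagger}$ I would rewrite $a^{\dagger}=a^{\#}$ through the power formula $a^{\dagger}=a^{k}(a^{\dagger})^{k+1}$ and slide $y$ past using $ya^{k}=a^{WD}a^{k}$, so that $ya^{\dagger}=(ya^{k})(a^{\dagger})^{k+1}=(a^{WD}a^{k})(a^{\dagger})^{k+1}=a^{WD}a^{\dagger}$; with $ind(a)=1$ this is just $ya=a^{WD}a$ combined with $a(a^{\dagger})^{2}=a^{\dagger}$.

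For (iv) the key computation is $a^{WD}a=a^{\dagger}a$. Multiplying the weak-Drazin relation $a^{WD}a^{k+1}=a^{k}$ of Definition \ref{am1} on the right by $(a^{\dagger})^{k}$ and using commutativity of $a,a^{\dagger}$ (so $a^{k}(a^{\dagger})^{k}=aa^{\dagger}$ and $a^{k+1}(a^{\dagger})^{k}=a$) gives $a^{WD}a=aa^{\dagger}=a^{\dagger}a$. Since $a^{\dagger}a$ is a Hermitian idempotent it is WD-invertible with $(a^{\dagger}a)^{WD}=a^{\dagger}a$ by Theorem \ref{dev} and $(a^{\dagger}a)^{\dagger}=a^{\dagger}a$, hence it is its own WDMP inverse; therefore $a^{WD}a=a^{\dagger}a$ is a WDMP inverse of $a^{\dagger}a$. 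For (v) I would note that $aa^{\dagger}$ is idempotent, so $(aa^{\dagger})^{k+1}=aa^{\dagger}$, whence $aa^{WD}(aa^{\dagger})^{k+1}=aa^{WD}aa^{\dagger}=(aa^{WD}a)a^{\dagger}=aa^{\dagger}$ by the weak-Drazin equation $aa^{WD}a=a$.

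The hard part will be the EP reduction and the careful bookkeeping of the index $k$: the second equality in (iii) and the identity $a^{WD}a=a^{\dagger}a$ in (iv) both rely on pushing $a^{WD}$ or $y$ through powers of $a$ and $a^{\dagger}$, which is only legitimate because $a$ and $a^{\dagger}$ commute. Everything else is routine once that commutativity and the equalities $a^{\dagger}=a^{\#}$, $ind(a)=1$ are secured.
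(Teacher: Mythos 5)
Your proposal is correct: the reduction Hermitian $\Rightarrow$ EP (so $aa^{\dagger}=a^{\dagger}a$, $a^{\dagger}=a^{\#}$, $a^{2}a^{\dagger}=a$) is exactly the leverage needed, and each of the five items then follows by the short computations you give, all of which check out (in particular $a^{WD}a=a^{\dagger}a$ and $a^{k}(a^{\dagger})^{k+1}=a^{\dagger}$ are verified correctly using the commutativity of $a$ and $a^{\dagger}$). The paper's source omits the proof of this theorem, but your argument is the natural one, relying only on Definitions \ref{am1}--\ref{am2}, Lemma \ref{ad2}, and Theorem \ref{dev}, and I see no gaps.
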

In 2015, Karanasios \cite{epl} provided the following result for EP elements.
\begin{theorem}(Theorem 4.21, \cite{epl})\label{t2.7}\\
An element $a \in R$ is EP if and only if $a \in R^{\#} \cap R^{\dagger}$ and
one of the following equivalent conditions hold:
\begin{enumerate}[(i)]
      \item  $a^na^{\dagger} = a^{\dagger}a^n,$ for some $n \geq 1$,
      \item  $(a^{\#})^na^{\dagger}=a^{\dagger}(a^{\#})^n,$ for some $n \geq 1$,
       \item  $(a^{\dagger})^n = (a^{\#})^n,$ for some $n \geq 1$.
\end{enumerate}
\end{theorem}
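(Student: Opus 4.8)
The plan is to reduce the whole statement to the single fact that, for $a\in R^{\#}\cap R^{\dagger}$, the element $a$ is EP if and only if $a^{\#}=a^{\dagger}$ (equivalently $aa^{\dagger}=a^{\dagger}a$). First I would dispose of the ``only if'' direction: if $a$ is EP then $a^{\dagger}$ itself satisfies the three defining equations of the group inverse, so $a\in R^{\#}$ with $a^{\#}=a^{\dagger}$; this proves ``$a$ EP $\Rightarrow a\in R^{\#}\cap R^{\dagger}$'', and since then $a^{\dagger}=a^{\#}$ commutes with every power of $a$, all three conditions (i)--(iii) hold at once (in fact for every $n\ge 1$). The substance is to show, under the standing hypothesis $a\in R^{\#}\cap R^{\dagger}$, that each of (i)--(iii) forces $a^{\#}=a^{\dagger}$.

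Throughout I would work with the idempotent $e=aa^{\#}=a^{\#}a$ and the Hermitian idempotents $p=aa^{\dagger}$, $q=a^{\dagger}a$, recalling that $a$ is EP $\iff p=q$. The two workhorse identities are
\begin{align}
    aa^{\dagger}a^{\#}=a^{\#} \quad\text{and}\quad a^{\#}a^{\dagger}a=a^{\#},\nonumber
\end{align}
each obtained in one line from $a^{\#}=a(a^{\#})^2=(a^{\#})^2a$ together with $aa^{\dagger}a=a$. I would also use the elementary facts $(a^{\#})^n a^n=a^n(a^{\#})^n=e$, $ep=p$, $qe=q$, $ea^{\#}=a^{\#}$ and $a^2a^{\#}=a$, all immediate from the commutativity of $a$ and $a^{\#}$.

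For (i), rewrite $a^na^{\dagger}=a^{\dagger}a^n$ as $a^{m}p=qa^{m}$ with $m=n-1$ (the case $m=0$ is EP outright). Left-multiplying by $(a^{\#})^{m}$ collapses the left side to $ep=p$, while on the right side the lone $a^{\dagger}$ sits between $a^{\#}$ and $a$, so $a^{\#}a^{\dagger}a=a^{\#}$ absorbs it and telescopes the powers down to $e$; this yields $p=e$. Right-multiplying the same relation by $(a^{\#})^{m}$ and using $aa^{\dagger}a^{\#}=a^{\#}$ symmetrically yields $q=e$. Hence $p=q=e$ and $a$ is EP. Condition (ii) is handled the same way: multiplying $(a^{\#})^na^{\dagger}=a^{\dagger}(a^{\#})^n$ by $a^n$ on the left and on the right, and absorbing the single $a^{\dagger}$ with the two workhorse identities, gives $ea^{\dagger}=a^{\#}$ and $a^{\dagger}e=a^{\#}$; multiplying these by $a$ (on the left, respectively the right) returns $p=e$ and $q=e$, so again $a$ is EP. Finally (iii) is the cheapest: substituting $(a^{\#})^n=(a^{\dagger})^n$ into (ii) turns the latter into the trivial identity $(a^{\dagger})^{n+1}=(a^{\dagger})^{n+1}$, so (iii) $\Rightarrow$ (ii) $\Rightarrow$ EP, which closes the chain of equivalences.

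The step I expect to be the genuine obstacle is the passage ``(i) or (ii) for some fixed $n$ $\Rightarrow$ EP'': naive cancellation of the powers fails because $a^{\#}$ commutes with $a$ but not, a priori, with $a^{\dagger}$ --- which is precisely the commutation one is trying to establish. The entire argument hinges on the two mixed identities, as they are the only device that lets a single $a^{\dagger}$ be pushed through a block of $a$'s and $a^{\#}$'s and made to vanish. The point requiring care is the bookkeeping: one must multiply on the correct side (left versus right) so as to extract \emph{both} $p=e$ and $q=e$, rather than deriving the same relation twice.
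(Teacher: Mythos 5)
This theorem is not proved in the paper at all: it is imported verbatim as Theorem 4.21 of the cited reference (Karanasios, \emph{EP elements in rings and in semigroups with involution and in C*-algebras}), so there is no internal proof to compare your argument against. Judged on its own, your proof is correct and self-contained. The ``only if'' direction is right: under EP, $a^{\dagger}$ satisfies $aa^{\dagger}a=a$, $a^{\dagger}aa^{\dagger}=a^{\dagger}$ and $aa^{\dagger}=a^{\dagger}a$, so by uniqueness of the group inverse $a^{\#}=a^{\dagger}$, and then (i)--(iii) hold for every $n$. Your two workhorse identities are valid: from $a^{\#}=a(a^{\#})^{2}=(a^{\#})^{2}a$ and $aa^{\dagger}a=a$ one gets $aa^{\dagger}a^{\#}=aa^{\dagger}a(a^{\#})^{2}=a(a^{\#})^{2}=a^{\#}$ and symmetrically $a^{\#}a^{\dagger}a=a^{\#}$. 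The bookkeeping in (i) checks out: writing $a^{n}a^{\dagger}=a^{\dagger}a^{n}$ as $a^{m}p=qa^{m}$ with $m=n-1$, left multiplication by $(a^{\#})^{m}$ gives $p=(a^{\#})^{m}a^{\dagger}a^{m+1}=e$, and right multiplication gives $a^{m+1}a^{\dagger}(a^{\#})^{m}=q$, i.e.\ $e=q$, so $p=q$. The same two-sided multiplication in (ii) yields $ea^{\dagger}=a^{\#}=a^{\dagger}e$, and multiplying by $a$ (using $ae=ea=a$) recovers $p=e=q$. Finally, (iii) $\Rightarrow$ (ii) by direct substitution is immediate. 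So the chain EP $\Leftrightarrow$ (i) $\Leftrightarrow$ (ii) $\Leftrightarrow$ (iii) (under $a\in R^{\#}\cap R^{\dagger}$) is fully established; this is exactly the kind of argument the cited source carries out, and nothing in your write-up relies on anything beyond the defining equations of $a^{\#}$ and $a^{\dagger}$.
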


Every EP element is  WDMP invertible. This is shown next.

\begin{theorem}
If $a\in R^{EP}$, then  $a$ is  WD and WDMP invertible. Moreover, $a^{\#}$ is a WD inverse and WDMP inverse of $a$. 
\end{theorem}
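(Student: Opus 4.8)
The plan is to exhibit $a^{\#}$ explicitly as the required inverse in each case, exploiting the fact that an EP element is characterized by the commutativity of $a$ with $a^{\dagger}$. The first preparatory step is to record that for $a \in R^{EP}$ one has $a^{\#} = a^{\dagger}$: since $aa^{\dagger}a = a$ and $a^{\dagger}aa^{\dagger} = a^{\dagger}$ hold by the Moore--Penrose axioms while $aa^{\dagger} = a^{\dagger}a$ holds by the EP hypothesis, the element $a^{\dagger}$ satisfies all three defining relations of the group inverse, so by uniqueness $a^{\#} = a^{\dagger}$. In particular $aa^{\#} = a^{\#}a$ and $aa^{\#} = aa^{\dagger}$, which will be used repeatedly. (This is also consistent with condition (iii) of Theorem \ref{t2.7} taken with $n=1$.)

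Next I would verify the three WD-inverse relations for $x = a^{\#}$. The identity $axa = aa^{\#}a = a$ is immediate from the group-inverse axiom. For the two power relations, the key computation is $a^2a^{\#} = a$, obtained from $a^2a^{\#} = a\cdot a^{\#}a = aa^{\#}a = a$ using the commutativity $aa^{\#} = a^{\#}a$ together with associativity; symmetrically $a^{\#}a^2 = a$. An easy induction then gives $a^{k+1}a^{\#} = a^{k-1}(a^2a^{\#}) = a^k$ and $a^{\#}a^{k+1} = (a^{\#}a^2)a^{k-1} = a^k$ for every positive integer $k$. Hence $a^{\#}$ satisfies $axa = a$, $a^{k+1}x = a^k$ and $xa^{k+1} = a^k$, so $a \in R^{WD}$ with $a^{WD} = a^{\#}$, and any $k \ge 1$ serves as the weak Drazin index.

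Finally, with the WD inverse fixed as $a^{WD} = a^{\#}$, I would check the three WDMP relations of Definition \ref{am2} for $y = a^{\#}$. The relation $yay = a^{\#}aa^{\#} = a^{\#}$ is the second group-inverse axiom; the relation $ay = aa^{\#} = aa^{\dagger}$ follows from $a^{\#} = a^{\dagger}$ established above; and the relation $ya^k = a^{WD}a^k$ holds trivially because $y = a^{\#} = a^{WD}$. Since $a \in R^{\dagger}$ (EP elements are Moore--Penrose invertible by definition) and $a \in R^{WD}$ by the previous step, the hypothesis $a \in R^{WD}\cap R^{\dagger}$ of Definition \ref{am2} is met, and therefore $a^{\#}$ is a WDMP inverse of $a$.

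I expect no serious obstacle here; the only point requiring care is the bookkeeping in the power identities $a^{k+1}a^{\#} = a^k$ and $a^{\#}a^{k+1} = a^k$, where one must consistently use the commutativity $aa^{\#} = a^{\#}a$ with associativity rather than assuming any Drazin-index cancellation. Once the base cases $a^2a^{\#} = a = a^{\#}a^2$ are in hand, everything else reduces to routine substitution into Definitions \ref{am1} and \ref{am2}.
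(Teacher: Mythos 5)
Your proof is correct and follows the natural route the paper takes: identify $a^{\#}=a^{\dagger}$ for an EP element (equivalently, invoke Theorem \ref{t2.7}(iii) with $n=1$), then verify the defining equations of Definitions \ref{am1} and \ref{am2} directly for $x=y=a^{\#}$, using $aa^{\#}=a^{\#}a=aa^{\dagger}$ to get $a^{k+1}a^{\#}=a^{\#}a^{k+1}=a^k$ and $ay=aa^{\dagger}$. No gaps.
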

% \begin{proof}
% By Theorem \ref{t2.7}, we have $a\in R^{\#}\cap R^{\dagger}$. So, there exists an element $x\in R$ such that 
% $$axa=a,xax=x, \text{ and } ax=xa.$$
% Now, from the above equations, we get $axa=a$, $a^2x=a$, and $xa^2=a$. So, $a\in R^{WD}$ with WD index $k=1$. Therefore, $a\in R^{WD}\cap R^{\dagger}$, and from Definition \ref{am2}, we get $a\in R^{WD\dagger}.$ From Theorem \ref{ad1}, $y=a^{WD}aa^{\dagger}.$ We have $a^{\#}=a^{\dagger}=a^{WD}$. So, $y=a^{WD}aa^{\dagger}=a^{\#}aa^{\#}=a^{\#}.$
% \end{proof}
Now, we recall the notion of {\it annihilators} of an element in a ring. The left annihilator of $a\in R$ is given by
${^{\circ}{a}}=\{x\in R:xa=0\} $ and  the right  annihilator of $a$ is given by $a^{\circ}=\{x\in R:ax=0\}$. The following lemma combines Lemma 2.5 and Lemma 2.6 of \cite{mis}.
\begin{lemma}\label{t2.10} Let $a, b \in R$.
 \begin{enumerate}[(i)]
\item If $aR \subseteq bR$, then ${^{\circ}b} \subseteq {^{\circ}a}$.
\item If $b$ is regular and  ${^{\circ}b} \subseteq {^{\circ}a}$, then $aR \subset bR.$
\item  If $Ra \subseteq Rb$, then ${b^{\circ}} \subseteq {a^{\circ}}$.
\item If $b$ is regular and ${b^{\circ}} \subseteq {a^{\circ}}$, then $Ra \subseteq Rb$.
\end{enumerate}
\end{lemma}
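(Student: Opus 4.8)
The plan is to prove all four parts by the same two elementary devices: for the two "forward" implications (i) and (iii) I would use that $a$ itself lies in the principal ideal it generates, while for the two "converse" implications (ii) and (iv) I would manufacture a witness in the annihilator of $b$ out of an inner inverse of $b$. No deep machinery is needed; the statement is purely ring-theoretic and does not invoke the involution or any of the generalized inverses introduced earlier.

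For part (i), I would observe that $a=a\cdot 1\in aR\subseteq bR$, so there is an $s\in R$ with $a=bs$. Now take any $x\in{^{\circ}b}$, i.e.\ $xb=0$; then
$$xa=x(bs)=(xb)s=0,$$
so $x\in{^{\circ}a}$, which gives ${^{\circ}b}\subseteq{^{\circ}a}$. Part (iii) is the left-right mirror image: from $Ra\subseteq Rb$ I get $a=tb$ for some $t\in R$, and for $x\in b^{\circ}$ (so $bx=0$) I compute $ax=(tb)x=t(bx)=0$, hence $b^{\circ}\subseteq a^{\circ}$.

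For the converse direction, I would use regularity of $b$, so fix an inner inverse $b^{-}$ with $bb^{-}b=b$. In part (ii) the key step is to note that $(1-bb^{-})b=b-bb^{-}b=b-b=0$, so $1-bb^{-}\in{^{\circ}b}$; by hypothesis ${^{\circ}b}\subseteq{^{\circ}a}$, hence $(1-bb^{-})a=0$, that is $a=bb^{-}a=b(b^{-}a)\in bR$. Multiplying on the right by an arbitrary $r\in R$ gives $ar=b(b^{-}ar)\in bR$, so $aR\subseteq bR$. Part (iv) is again symmetric: here I would use $b(1-b^{-}b)=b-bb^{-}b=0$, so that $1-b^{-}b\in b^{\circ}\subseteq a^{\circ}$, whence $a=ab^{-}b=(ab^{-})b\in Rb$ and therefore $Ra\subseteq Rb$.

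I do not expect a genuine obstacle here, since every step is a one-line computation. The only point requiring a small amount of care — and the place where the regularity hypothesis is indispensable — is the construction of the annihilator witness in (ii) and (iv): without an inner inverse there is no canonical way to produce an element of ${^{\circ}b}$ (respectively $b^{\circ}$) that forces the factorization of $a$ through $b$, which is precisely why these two parts assume $b$ is regular while (i) and (iii) do not.
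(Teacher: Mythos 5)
Your proof is correct and is the standard argument for this fact: the paper itself gives no proof, citing the lemma as a combination of Lemmas 2.5 and 2.6 of Raki\'c--Din\v{c}i\'c--Djordjevi\'c, and the argument there is essentially the one you give (factor $a$ through $b$ for (i) and (iii); use the annihilator witnesses $1-bb^{-}$ and $1-b^{-}b$ built from an inner inverse of $b$ for (ii) and (iv)). No gap; your remark about where regularity is indispensable is exactly the right observation.
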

The next result provides a relation between WDMP inverse and annihilators.
\begin{theorem}
Let $a\in R^{WD\dagger}$ and $y$ be a WDMP inverse of $a$. Then, 
\begin{enumerate}[(i)]
\item $Ry=Ra^*$ and $aR=y^*R.$
\item ${y}^{\circ}={(a^*)}^{\circ}$ and ${^{\circ}{a}}={^{\circ}{(y^*)}}$. 
\end{enumerate}
\end{theorem}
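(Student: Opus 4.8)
The plan is to reduce everything to the single identity $Ry = Ra^*$; once this is in hand, the remaining three claims follow almost mechanically. The bridge throughout is the observation that, by Definition \ref{am2}, $ay = aa^{\dagger}$ is Hermitian, so that
$$aa^{\dagger} = ay = (ay)^* = y^*a^*.$$
This one equation links $y$, $a^{\dagger}$, and $a^*$, and is really the whole engine of the argument.

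First I would prove $Ry \subseteq Ra^*$. Starting from the identity $yay = y$ of Definition \ref{am2} and substituting $ay = aa^{\dagger} = y^*a^*$, I obtain $y = y(ay) = y(y^*a^*) = (yy^*)a^* \in Ra^*$. For the reverse inclusion $Ra^* \subseteq Ry$, I would first record the Moore-Penrose identity $a^* = a^*aa^{\dagger}$, obtained by taking adjoints in $a = (aa^{\dagger})a$ and using that $aa^{\dagger}$ is Hermitian. Replacing $aa^{\dagger}$ by $ay$ then gives $a^* = a^*aa^{\dagger} = (a^*a)y \in Ry$. Combining the two inclusions yields $Ry = Ra^*$, which is the first equality of (i).

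The second equality of (i) then comes for free: applying the involution to $Ry = Ra^*$ and using that $*$ is a bijective anti-automorphism, one has $(Ry)^* = y^*R$ and $(Ra^*)^* = aR$, so that $aR = y^*R$. For part (ii), I would simply feed these ideal identities into Lemma \ref{t2.10}. From $Ry = Ra^*$, part (iii) of Lemma \ref{t2.10} applied to the two inclusions $Ry \subseteq Ra^*$ and $Ra^* \subseteq Ry$ gives $(a^*)^{\circ} \subseteq y^{\circ}$ and $y^{\circ} \subseteq (a^*)^{\circ}$, whence ${y}^{\circ} = {(a^*)}^{\circ}$; likewise, from $aR = y^*R$, part (i) of Lemma \ref{t2.10} applied to both inclusions yields ${^{\circ}a} = {^{\circ}(y^*)}$. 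Note that no regularity hypothesis is needed, since in each case both inclusions of the underlying one-sided ideals are available directly, so only parts (i) and (iii) of Lemma \ref{t2.10} are invoked.

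There is no serious obstacle here; the only thing to get right is the choice of substitutions, and in particular recognizing that $aa^{\dagger} = ay = y^*a^*$ is the correct bridge between the Moore-Penrose and WDMP structures. The one subtlety worth flagging is that the argument uses only the three defining equations of the WDMP inverse together with the standard Moore-Penrose identities, and never the explicit form $y = a^{WD}aa^{\dagger}$ from Theorem \ref{ad1}; hence the conclusion holds for \emph{any} WDMP inverse $y$ of $a$, not merely the canonical one.
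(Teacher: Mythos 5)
Your proposal is correct and follows what is essentially the standard (and, given the placement of Lemma \ref{t2.10} immediately before the statement, clearly intended) route: derive the factorizations $y=(yy^*)a^*$ and $a^*=(a^*a)y$ from $ay=aa^{\dagger}=y^*a^*$ to get $Ry=Ra^*$, obtain $aR=y^*R$ by applying the involution, and then read off the annihilator equalities from parts (i) and (iii) of Lemma \ref{t2.10}. All substitutions check out, and your remark that only the defining equations of a WDMP inverse are used (so the result holds for any such $y$) is accurate.
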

% \begin{proof}
% \begin{enumerate}[(i)]
% \item 
% From  Definition \ref{am2} and Theorem \ref{ad3}, we have $yay=y$ and $(ay)^*=ay$. Now, $Ry\subseteq Ryay\subseteq Ry(ay)^* \subseteq R(ay)^*\subseteq Ry^*a^* \subseteq Ra^*.$ Conversely, by Lemma \ref{ad2} (i), we have $aya^k =a^k$ for any positive integer $k.$ So,  $aya=a$, taking involution both sides, we get $a^*ay=a^*.$    So, $Ra^*\subseteq Ra^*ay\subseteq Ry$. Hence, $Ry=Ra^*.$\\
% Again, by Lemma  \ref{ad2} (i), $aya^k=a^k$ for any positive integer $k.$ So,  $aya=a$. By Theorem \ref{ad3}, $(ay)^*=ay$. Hence, $(ay)^*a=a$, i.e.,  $y^*a^*a=a$. Now, $aR\subseteq y^*a^*aR\subseteq y^*R.$ Conversely, from  Definition \ref{am2}, we have $yay=y$, taking involution both sides, we get $(ay)^*y^*=y^*$, i.e.,  $ayy^*=y^*.$ Therefore, $y^*R\subseteq ayy^*R\subseteq aR.$ Hence, $aR=y^*R.$
% \item We have $Ry=Ra^*$ implies $Ry\subseteq Ra^*.$ By Lemma \ref{t2.10} (iii), $(a^*)^{\circ}\subseteq y^{\circ}.$ Also, $Ry=Ra^*$ implies $Ra^*\subseteq Ry$. Again, by Lemma \ref{t2.10} (iii), $y^{\circ}\subseteq (a^*)^{\circ}.$ Hence, $y^{\circ}=(a^*)^{\circ}.$  Similarly, ${^{\circ}a}={^{\circ}(y^*)}.$

% \end{enumerate}
% \end{proof}
Next, we present $ay$ and $ya$ are Hirano invertible.
\begin{theorem}\label{ad6}
Let $a\in R^{WD\dagger}$ and $y$ be a  WDMP inverse of $a$. Then, $ay$ and $ya$  are both  Hirano invertible.
\end{theorem}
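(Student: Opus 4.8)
The plan is to reduce the claim to the nilpotency criterion for Hirano invertibility recorded in Theorem \ref{kd1}, which asserts that an element $c \in R$ has a Hirano inverse if and only if $c - c^3 \in N(R)$. So my goal becomes to show that $ay - (ay)^3$ and $ya - (ya)^3$ both belong to $N(R)$, and by the theorem this is equivalent to the desired Hirano invertibility of $ay$ and $ya$.

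The decisive ingredient is already available: by Lemma \ref{ad2}(ii), both $ay$ and $ya$ are idempotent. I would use the elementary observation that an idempotent $e$ satisfies $e^3 = e$, since $e^3 = e\,e^2 = e\,e = e^2 = e$; hence $e - e^3 = 0$. Taking $e = ay$ and then $e = ya$, I obtain $ay - (ay)^3 = 0$ and $ya - (ya)^3 = 0$, and $0 \in N(R)$ trivially (as $0$ is nilpotent). If one prefers to argue directly from the definition of the generalized Hirano inverse via Corollary \ref{co2.4}, the same idempotency lets one take the commuting idempotent $p$ to be $ay$ (respectively $ya$) itself, so that $(ay)^2 - p = ay - ay = 0 \in R^{qnil}$; either route closes the argument.

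Feeding the computed identities into Theorem \ref{kd1} immediately yields that $ay$ and $ya$ are each Hirano invertible, which is exactly the assertion. There is no real obstacle to overcome here: once the idempotency of $ay$ and $ya$ supplied by Lemma \ref{ad2}(ii) is in hand, the nilpotency condition of Theorem \ref{kd1} is met in the strongest possible way (by the zero element), so the only point requiring care is to invoke the correct characterization of Hirano invertibility rather than to perform any substantive computation.
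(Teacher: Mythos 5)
Your proof is correct and takes essentially the same route as the paper: Lemma \ref{ad2}(ii) gives that $ay$ and $ya$ are idempotent, an idempotent $e$ satisfies $e-e^3=0\in N(R)$, and Theorem \ref{kd1} then yields Hirano invertibility. The only caveat is your optional alternative via Corollary \ref{co2.4}, which concerns \emph{generalized} Hirano inverses in a Banach algebra rather than Hirano inverses in a ring and so should be dropped; your main argument does not rely on it.
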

% \begin{proof}
% From Lemma \ref{ad2} (ii), $ay$ and $ya$ are both  idempotent elements. So, $ya-(ya)^3=0$ and $ay-(ay)^3=0$ are both  nilpotent elements. From Theorem \ref{kd1}, $ya$ and $ay$  are both Hirano invertible.
% \end{proof}
% The following corollary directly follows from  Theorem \ref{ad6} and Corollary \ref{co2.4}.
% \begin{corollary}
% Let $a\in R^{WD\dagger}$ and $y$ be a WDMP inverse of $a$. Then, there exists an unique idempotent element $p$ such that
% $ayp=pay$ and $(ay)^2-p\in R^{qnil}$.
% \end{corollary}
% \begin{theorem}\label{ad7}
% Let $a\in R^{WD\dagger}$ and $y$ be a WDMP inverse of  $a$. Then, $ay$ and $ya$  are both Drazin invertible.
% \end{theorem}
% \begin{proof}
% It is clear from Theorem \ref{kd2} and Theorem \ref{ad6}.
% \end{proof}
We present a theorem for  idempotent elements that are used to present upcoming results.
\begin{theorem}\label{lema}
Let $a,b \in R$ and $x,y\in R$ be two idempotent elements. Then, the following holds:
\begin{itemize}
    \item [(i)] $(1-x)a=b$ if and only if $xb=0$ and ${^{\circ}(x)}\subseteq{^{\circ}(a-b)},$
    \item[(ii)] $a(1-y)=b$ if and only if $by=0$ and $ (y)^{\circ}\subseteq(a-b)^{\circ}.$
\end{itemize}
\end{theorem}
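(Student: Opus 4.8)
The plan is to prove each equivalence directly, exploiting the idempotency of $x$ and $y$ together with the key fact that $1-x$ and $1-y$ are themselves idempotents lying in the relevant one-sided annihilator. I treat part (i) in detail; part (ii) then follows by the symmetric left--right dual argument.

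For the forward implication of (i), I assume $(1-x)a=b$, which I rewrite as $a-b=xa$. Left-multiplying by $x$ and using $x^2=x$ gives $xb=x(1-x)a=(x-x^2)a=0$, yielding the first conclusion. For the annihilator inclusion, I take an arbitrary $z\in{^{\circ}(x)}$, so $zx=0$; then $z(a-b)=z(xa)=(zx)a=0$, hence $z\in{^{\circ}(a-b)}$, establishing ${^{\circ}(x)}\subseteq{^{\circ}(a-b)}$.

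For the converse, I assume $xb=0$ and ${^{\circ}(x)}\subseteq{^{\circ}(a-b)}$. The only step requiring a small idea rather than routine manipulation is the observation that $1-x$ annihilates $x$ on the left: $(1-x)x=x-x^2=0$, so $1-x\in{^{\circ}(x)}$. The hypothesised inclusion then forces $1-x\in{^{\circ}(a-b)}$, i.e. $(1-x)(a-b)=0$. Expanding and invoking $xb=0$ gives $(1-x)a=(1-x)b=b-xb=b$, which is precisely $(1-x)a=b$.

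Part (ii) is handled identically after interchanging the roles of left and right multiplication: in the forward direction one checks $by=a(1-y)y=0$ and, for $z\in(y)^{\circ}$, that $(a-b)z=(ay)z=a(yz)=0$; in the converse one uses $y(1-y)=y-y^2=0$ to place $1-y\in(y)^{\circ}\subseteq(a-b)^{\circ}$, then expands $(a-b)(1-y)=0$ together with $by=0$ to recover $a(1-y)=b$. I do not anticipate any genuine obstacle here; the whole argument rests on the idempotency identities $x^2=x$, $(1-x)x=0$ and their right-handed analogues, so the only point demanding care is the bookkeeping of which side each annihilator and each multiplication acts on.
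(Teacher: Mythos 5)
Your proof is correct and proceeds by the natural direct argument: in the forward direction you use $a-b=xa$ (resp.\ $a-b=ay$) together with $x^2=x$, and in the converse the key observation that $1-x\in{^{\circ}(x)}$ (resp.\ $1-y\in(y)^{\circ}$) lets the hypothesised annihilator inclusion deliver $(1-x)(a-b)=0$, after which $xb=0$ finishes the computation. This is essentially the same elementary approach the paper takes, and both directions of both parts check out.
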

% \begin{proof}
%     \begin{itemize}
%         \item [(i)] Let $(1-x)a=b$.
%       Pre-multiplying by $x$ both sides, we get
%         \begin{align*}
%        x(1-x)a=xb, \text{ i.e., }  0=xb.
%        \end{align*} Now, $m\in{^{\circ}(x)}$ implies
%         \begin{align*}
%           m(-xa)=0, \text{ i.e., }  m(a-b)=0.
%            \end{align*}
% So,  $m\in{^{\circ}(a-b)}.$ Hence, ${^{\circ}(x)}\subseteq{^{\circ}(a-b)}$ and $xb=0.$\par
%         Conversely: $(1-x)\in {^{\circ}(x)}$, yields $(1-x)\in{^{\circ}(a-b)}$. So,  $(1-x)(a-b)=0,$ which implies $(1-x)a=(1-x)b,$ i.e., $(1-x)a=b-xb.$ We know that $xb=0$, so
%                $$(1-x)a=b.$$
       
%        \item [(ii)] Let $a(1-y)=b$.
%        Post-multiplying by $y$ both sides, we get
%        \begin{align*}
%            a(1-y)y=by, \text{ i.e., } by=0.
%        \end{align*}
%        Now, $m\in (y)^{\circ}$ implies
%        \begin{align*}
%            aym=0, \text{ i.e., }  (a-b)m=0.
%           \end{align*}
%        So, $  m\in {(a-b)^{\circ}}.$  Hence, $(y)^{\circ}\subset (a-b)^{\circ}$ and $by=0$.\par
%        Conversely:  $(1-y)\in (y)^{\circ}$, yields $(1-y)\in (a-b)^{\circ}$. So,
%          $(a-b)(1-y)=0$, which implies 
%         $ a(1-y)=b(1-y)$, i.e., $a(1-y)=b-by.$  We know that $by=0$, we get 
%           $$a(1-y)=b.$$
%     \end{itemize} 
% \end{proof}
As $a^{WD}a$ and $aa^{WD}$  are both idempotent elements, by  Theorem \ref{lema} we obtain the following corollaries.
\begin{corollary}
Let $a\in R^{WD}$. Then, for any $b,c\in R$,
\begin{enumerate}[(i)]
    \item $(1-a^{WD}a)b=c$ if and only if $a^{WD}ac=0$ and  ${^{\circ}(a^{WD}}a)\subset{^{\circ}(b-c)}.$
    \item $b(1-a^{WD}a)=c$ if and only if $ca^{WD}a=0$ and  ${(a^{WD}}a)^{\circ}\subset{(b-c)^{\circ}}.$
\end{enumerate}
\end{corollary}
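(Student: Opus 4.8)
The final statement is a corollary that specializes Theorem \ref{lema} to the idempotents $a^{WD}a$ and $aa^{WD}$. The plan is to invoke Theorem \ref{lema} directly, observing that both $a^{WD}a$ and $aa^{WD}$ are idempotent elements of $R$ whenever $a\in R^{WD}$, a fact established in the proof of Theorem \ref{ad5} (there $ax=aa^{WD}$ and $xa=a^{WD}a$ are shown to satisfy $(ax)^2=ax$ and $(xa)^2=xa$). Since no involution or Moore-Penrose invertibility is required, the hypothesis $a\in R^{WD}$ suffices to guarantee that these two idempotents exist, so the corollary is well-posed.

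For part (i), I would set $x=a^{WD}a$ in Theorem \ref{lema}(i). Since $a^{WD}a$ is idempotent, the equivalence there states that $(1-a^{WD}a)b=c$ if and only if $(a^{WD}a)c=0$ and ${^{\circ}(a^{WD}a)}\subseteq{^{\circ}(b-c)}$, which is exactly the claimed statement (the containment symbol $\subset$ in the corollary being read as the inclusion $\subseteq$ inherited from the theorem). For part (ii), I would instead set $y=a^{WD}a$ in Theorem \ref{lema}(ii), giving $b(1-a^{WD}a)=c$ if and only if $c(a^{WD}a)=0$ and $(a^{WD}a)^{\circ}\subseteq(b-c)^{\circ}$. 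Thus each part is a one-line application of the corresponding part of the master theorem to the specific idempotent $a^{WD}a$.

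There is essentially no mathematical obstacle here, since the corollary is a direct substitution into an already-proven theorem; the only point requiring a word of care is the identification of $a^{WD}a$ (and, implicitly in the preamble, $aa^{WD}$) as a bona fide idempotent, which I would justify by citing the computation $(a^{WD}a)^2=a^{WD}(aa^{WD}a)=a^{WD}a$ using the defining relation $aa^{WD}a=a$ from Definition \ref{am1}. Accordingly, a complete proof reads as follows.

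\begin{proof}
Since $a\in R^{WD}$, the defining equation $aa^{WD}a=a$ gives $(a^{WD}a)^2=a^{WD}(aa^{WD}a)=a^{WD}a$, so $x:=a^{WD}a$ is idempotent (as already noted in the proof of Theorem \ref{ad5}). Applying Theorem \ref{lema}(i) with this $x$ yields that $(1-a^{WD}a)b=c$ if and only if $(a^{WD}a)c=0$ and ${^{\circ}(a^{WD}a)}\subseteq{^{\circ}(b-c)}$, which proves (i). Likewise, applying Theorem \ref{lema}(ii) with $y:=a^{WD}a$ gives that $b(1-a^{WD}a)=c$ if and only if $c(a^{WD}a)=0$ and $(a^{WD}a)^{\circ}\subseteq(b-c)^{\circ}$, which proves (ii).
\end{proof}
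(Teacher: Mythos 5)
Your proposal is correct and matches the paper's approach exactly: the paper derives this corollary by noting that $a^{WD}a$ and $aa^{WD}$ are idempotent and then invoking Theorem \ref{lema}, which is precisely what you do, with the idempotency verified via $(a^{WD}a)^2=a^{WD}(aa^{WD}a)=a^{WD}a$. No issues.
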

Similarly, $ay$ and $ya$, (where $y$ is a WDMP inverse of $a$) are both  idempotent elements, hence the following holds.
\begin{corollary}
Let $a\in R^{WD\dagger}$. Then, for any $b,c\in R$,
\begin{enumerate}[(i)]
    \item $(1-a^{WD\dagger}a)b=c$ if and only if $a^{WD\dagger}ac=0$ and  ${^{\circ}(a^{WD\dagger}a)}\subseteq{^{\circ}(b-c)}.$
    \item $b(1-a^{WD\dagger}a)=c$ if and only if $ca^{WD\dagger}a=0$ and  ${(a^{WD\dagger}a)^{\circ}}\subseteq{(b-c)^{\circ}}.$
\end{enumerate}
\end{corollary}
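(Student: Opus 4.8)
The plan is to obtain both equivalences as immediate specializations of Theorem \ref{lema}, so that the only genuine structural input is the idempotency of $a^{WD\dagger}a$. I would first record this fact: by Lemma \ref{ad2}(ii), for a WDMP inverse $y$ of $a$ the products $ay$ and $ya$ are both idempotent; since $y=a^{WD\dagger}$, this says precisely that $a^{WD\dagger}a$ (and likewise $aa^{WD\dagger}$) is an idempotent element. With this in hand, neither part requires any ring computation, because Theorem \ref{lema} is already stated for an \emph{arbitrary} idempotent; each item is merely a relabelling of the corresponding item in that theorem.

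For part (i), I would invoke Theorem \ref{lema}(i) with the idempotent taken to be $a^{WD\dagger}a$, and with the generic elements $a,b$ of that theorem played here by $b,c$ respectively. Under the substitution $x\mapsto a^{WD\dagger}a$, $a\mapsto b$, $b\mapsto c$, the equivalence $(1-x)a=b \Longleftrightarrow xb=0 \text{ and } {^{\circ}(x)}\subseteq{^{\circ}(a-b)}$ reads $(1-a^{WD\dagger}a)b=c \Longleftrightarrow a^{WD\dagger}a\,c=0 \text{ and } {^{\circ}(a^{WD\dagger}a)}\subseteq{^{\circ}(b-c)}$, which is exactly (i). For part (ii), I would apply Theorem \ref{lema}(ii) with the same idempotent $a^{WD\dagger}a$ and the same renaming $a\mapsto b$, $b\mapsto c$; then $a(1-y)=b \Longleftrightarrow by=0 \text{ and } (y)^{\circ}\subseteq(a-b)^{\circ}$ becomes $b(1-a^{WD\dagger}a)=c \Longleftrightarrow c\,a^{WD\dagger}a=0 \text{ and } (a^{WD\dagger}a)^{\circ}\subseteq(b-c)^{\circ}$, which is (ii).

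There is no substantive obstacle here; the entire content sits in Theorem \ref{lema} together with the idempotency supplied by Lemma \ref{ad2}(ii). The only point to watch is bookkeeping: one must keep the left annihilator ${^{\circ}(\cdot)}$ paired with part (i) and the right annihilator $(\cdot)^{\circ}$ paired with part (ii), matching the side on which $(1-a^{WD\dagger}a)$ multiplies in each equation. Once the variable substitution is checked to land the annihilator inclusions on the correct sides, the corollary follows at once.
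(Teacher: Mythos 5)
Your proposal is correct and matches the paper's own route exactly: the paper likewise observes (via Lemma \ref{ad2}(ii)) that $ya=a^{WD\dagger}a$ is idempotent and then reads off both parts as direct specializations of Theorem \ref{lema}. No further comment is needed.
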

We end this section with the annihilator property of a WDMP inverse.
\begin{corollary}
If $a\in R^{WD\dagger}$ and y be a  WDMP inverse of $a$, then $${(a^{WD}a)^{\circ}}\subseteq{(ya)^{\circ}}\subseteq {(a^{k+1})^{\circ}}.$$ 
\end{corollary}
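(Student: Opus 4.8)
The plan is to prove the two inclusions separately, and in each case to reduce the statement to a containment of principal left ideals so that Lemma~\ref{t2.10}(iii) applies. Recall that Lemma~\ref{t2.10}(iii) says $Ru\subseteq Rv \implies v^{\circ}\subseteq u^{\circ}$; thus for the first inclusion $(a^{WD}a)^{\circ}\subseteq(ya)^{\circ}$ it suffices to show $R(ya)\subseteq R(a^{WD}a)$, and for the second inclusion $(ya)^{\circ}\subseteq(a^{k+1})^{\circ}$ it suffices to show $Ra^{k+1}\subseteq R(ya)$. Each of these containments will in turn follow once the generator on the left is exhibited as a right multiple of the generator on the right.

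For the first containment, I would compute the product $(ya)(a^{WD}a)$. Writing it out and using the defining weak Drazin equation $aa^{WD}a=a$ from Definition~\ref{am1}, we get
\[
(ya)(a^{WD}a)=y\,(aa^{WD}a)=ya.
\]
Hence $ya=(ya)(a^{WD}a)\in R(a^{WD}a)$, so $R(ya)\subseteq R(a^{WD}a)$, and Lemma~\ref{t2.10}(iii) yields $(a^{WD}a)^{\circ}\subseteq(ya)^{\circ}$.

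For the second containment, I would invoke Lemma~\ref{ad2}(v), which gives $a^{k+1}ya=a^{k+1}$ with $k=ind(a)$. Reading the left-hand side as $a^{k+1}(ya)$, this exhibits $a^{k+1}=\bigl(a^{k+1}\bigr)(ya)\in R(ya)$, so $Ra^{k+1}\subseteq R(ya)$, and Lemma~\ref{t2.10}(iii) again gives $(ya)^{\circ}\subseteq(a^{k+1})^{\circ}$. Chaining the two inclusions produces $(a^{WD}a)^{\circ}\subseteq(ya)^{\circ}\subseteq(a^{k+1})^{\circ}$, as claimed.

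The argument is essentially bookkeeping, so there is no serious obstacle; the only point demanding care is getting the direction of the ideal containments to match the hypothesis of Lemma~\ref{t2.10}(iii). In particular I would note that we use only the ``forward'' implication of that lemma (containment of left ideals implies reverse containment of right annihilators), so no regularity assumption on $ya$, $a^{WD}a$, or $a^{k+1}$ is needed. It is worth double-checking that the two collapsing identities genuinely rest on distinct facts: the first uses only the relation $aa^{WD}a=a$ built into the WD inverse, whereas the second uses the index-level relation $a^{k+1}ya=a^{k+1}$ established in Lemma~\ref{ad2}(v).
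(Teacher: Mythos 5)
Your proof is correct and follows the natural route the paper intends: both inclusions reduce to the collapsing identities $(ya)(a^{WD}a)=ya$ (from $aa^{WD}a=a$) and $a^{k+1}(ya)=a^{k+1}$ (Lemma~\ref{ad2}(v)), combined with the forward direction of Lemma~\ref{t2.10}(iii). Your remark that no regularity hypothesis is needed is also accurate, since only the implication from left-ideal containment to reversed right-annihilator containment is used.
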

% \begin{proof} We know
% $a^{k+1}(1-a^{WD}a)=0$. By Theorem \ref{lema}, ${(a^{WD}}a)^{\circ}\subseteq{(a^{k+1})^{\circ}}$, i.e., ${(a^{WD}}a)^{\circ}\subseteq {(a^{WD}aa^{\dagger}a)^{\circ}}\subseteq{(a^{k+1})^{\circ}}$, i.e, ${(a^{WD}}a)^{\circ}\subseteq{(ya)^{\circ}}\subseteq{(a^{k+1})^{\circ}}.$
% \end{proof}

\section{Reverse-order law, Forward-order law and Additive property}\label{sec4}
In this section, we present the additive property, the reverse-order law and the forward-order law for  WD  inverse and  WDMP inverse, respectively.
We start this section with an  example that shows the additive property does not always hold for WD inverse. 
Next, we establish a result for the additive property involving  WD inverse.
\begin{theorem}\label{t4.1}
Let $a,b\in R^{WD}$. If $ab=ba=0$, $ab^{WD}=b^{WD}a=0$, and $a^{WD}b=ba^{WD}=0$, then $(a+b)^{WD}=a^{WD}+b^{WD}$.
\end{theorem}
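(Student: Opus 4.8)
The plan is to verify directly that $x := a^{WD} + b^{WD}$ satisfies the three defining equations of Definition \ref{am1} for the element $a+b$, with weak Drazin index taken to be $m = \max\{ind(a), ind(b)\}$. Two preliminary observations drive everything. First, since $ab = ba = 0$, an easy induction gives $(a+b)^n = a^n + b^n$ for every positive integer $n$, so all mixed terms between powers of $a$ and $b$ disappear. Second, the second and third WD equations propagate upward: from $a^{k+1}a^{WD} = a^k$ (with $k = ind(a)$) one gets $a^{n+1}a^{WD} = a^n$ for every $n \geq k$ by left-multiplying with $a^{n-k}$, and similarly for the companion equation and for $b$. Hence at the common index $m$ all four identities $a^{m+1}a^{WD} = a^m$, $a^{WD}a^{m+1} = a^m$, $b^{m+1}b^{WD} = b^m$, $b^{WD}b^{m+1} = b^m$ are available.

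For the first equation $(a+b)x(a+b) = a+b$, I would expand $(a+b)(a^{WD}+b^{WD}) = aa^{WD} + ab^{WD} + ba^{WD} + bb^{WD}$ and use the hypotheses $ab^{WD} = 0$ and $ba^{WD} = 0$ to reduce this to $aa^{WD} + bb^{WD}$. Multiplying on the right by $a+b$ produces four terms; the two diagonal ones give $aa^{WD}a = a$ and $bb^{WD}b = b$ by the first WD equation, while the two off-diagonal ones, $aa^{WD}b = a(a^{WD}b)$ and $bb^{WD}a = b(b^{WD}a)$, vanish because $a^{WD}b = 0$ and $b^{WD}a = 0$. This yields $(a+b)x(a+b) = a+b$.

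For the remaining two equations I would use the power identity to write $(a+b)^{m+1}x = (a^{m+1}+b^{m+1})(a^{WD}+b^{WD})$, expand into four terms, and cancel the cross terms $a^{m+1}b^{WD} = a^m(ab^{WD}) = 0$ and $b^{m+1}a^{WD} = b^m(ba^{WD}) = 0$. The two surviving terms equal $a^m + b^m = (a+b)^m$ by the propagated WD equations, establishing $(a+b)^{m+1}x = (a+b)^m$. A symmetric computation, now peeling factors of $a$ and $b$ off on the right and invoking $a^{WD}b = 0$ and $b^{WD}a = 0$, gives $x(a+b)^{m+1} = (a+b)^m$. Together with the first equation this shows $x = a^{WD} + b^{WD}$ is a WD inverse of $a+b$.

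I do not anticipate a genuine obstacle here: the argument is essentially a bookkeeping exercise in which each of the six annihilation hypotheses is used exactly once to kill a cross term, while the two structural facts $(a+b)^n = a^n + b^n$ and the upward propagation of the WD equations do all the real work. The only point requiring a little care is the choice of a single index $m$ valid for both $a$ and $b$ simultaneously, which is why I take $m = \max\{ind(a), ind(b)\}$ rather than either individual index.
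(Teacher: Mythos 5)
Your proof is correct and follows the same direct-verification route the paper takes: expand $(a+b)(a^{WD}+b^{WD})(a+b)$, $(a+b)^{m+1}(a^{WD}+b^{WD})$, and $(a^{WD}+b^{WD})(a+b)^{m+1}$ using $(a+b)^n=a^n+b^n$, kill each cross term with one of the six annihilation hypotheses, and use the WD equations (propagated up to the common index $m=\max\{ind(a),ind(b)\}$) on the diagonal terms. No gaps.
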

% \begin{proof}
% Suppose $k=max\{ind(a),ind(b)\}$. We have $ab=ba=0$, so by the binomial expansion  $(a+b)^m=a^m+b^m$ for every positive integer $m$. Further, we obtain
% \begin{align}\label{e5.8}
%     (a+b)(a^{WD}+b^{WD})(a+b)&=(aa^{WD}+ab^{WD}+ba^{WD}+bb^{WD})(a+b)\nonumber\\
%     &=aa^{WD}a+ab^{WD}a+ba^{WD}a+bb^{WD}a+aa^{WD}b\nonumber\\&
%     +ab^{WD}b+ba^{WD}b+bb^{WD}b\nonumber\\
%     &=aa^{WD}a+bb^{WD}b\nonumber\\
%     &=a+b,
% \end{align}
% \begin{align}\label{e5.9}
%     (a^{WD}+b^{WD})(a+b)^{k+1}&=(a^{WD}+b^{WD})(a^{k+1}+b^{k+1})\nonumber\\
%     &=a^{WD}a^{k+1}+a^{WD}b^{k+1}+b^{WD}a^{k+1}+b^{WD}b^{k+1}\nonumber\\
%     &=a^{k}+a^{WD}b^{k+1}+b^{WD}a^{k+1}+b^{k}\nonumber\\
%     &=a^k+b^k\nonumber\\
%     &=(a+b)^k,
% \end{align}
% and
% \begin{align}\label{e5.10}
%   (a+b)^{k+1}(a^{WD}+b^{WD})&=(a^{k+1}+b^{k+1})(a^{WD}+b^{WD})\nonumber\\
%   &=a^{k+1}a^{WD}+a^{k+1}b^{WD}+b^{k+1}a^{WD}+b^{k+1}b^{WD}\nonumber\\
%   &=a^k+b^k\nonumber\\
%   &=(a+b)^k.
% \end{align}
% From \eqref{e5.8}, \eqref{e5.9} and \eqref{e5.10}, we get $(a+b)^{WD}=a^{WD}+b^{WD}.$
% \end{proof}
The next result discusses the reverse-order law  for  WD inverse.

\begin{theorem}\label{t5.2}
Let $a,b\in R^{WD}$. If $ab=ba$ and $bb^{WD}a^{WD}=a^{WD}bb^{WD}$, then $(ab)^{WD}=b^{WD}a^{WD}.$
\end{theorem}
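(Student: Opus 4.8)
The plan is to put $m=\max\{ind(a),ind(b)\}$ and $x=b^{WD}a^{WD}$, and to verify directly that $x$ fulfils the three defining equations of a WD inverse (Definition \ref{am1}) for the element $ab$ with weak Drazin index $m$; namely $(ab)x(ab)=ab$, $(ab)^{m+1}x=(ab)^m$, and $x(ab)^{m+1}=(ab)^m$. Establishing these three identities is exactly what it means to show $ab\in R^{WD}$ with $(ab)^{WD}=b^{WD}a^{WD}$ (recall that Definition \ref{am1} claims no uniqueness, so it suffices to exhibit one valid index). Since $ab=ba$, the powers factor as $(ab)^n=a^nb^n$ for every $n$, and I will use repeatedly the inner-inverse relations $aa^{WD}a=a$, $bb^{WD}b=b$ together with $a^{m+1}a^{WD}=a^m$, $a^{WD}a^{m+1}=a^m$, $b^{m+1}b^{WD}=b^m$, $b^{WD}b^{m+1}=b^m$, which hold because $m\ge ind(a)$ and $m\ge ind(b)$ (iterating the corresponding equations of Definition \ref{am1}).

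For the first equation I would compute $(ab)(b^{WD}a^{WD})(ab)=a\,(bb^{WD}a^{WD})\,ab$ and invoke the hypothesis $bb^{WD}a^{WD}=a^{WD}bb^{WD}$ to rewrite this as $aa^{WD}\,bb^{WD}\,ab$. Then, using $ab=ba$, I replace the trailing $ab$ by $ba$, so that $bb^{WD}(ab)=(bb^{WD}b)a=ba=ab$ by $bb^{WD}b=b$; this collapses the whole expression to $aa^{WD}(ab)=(aa^{WD}a)b=ab$ by $aa^{WD}a=a$. This is the only place where the cross relation $bb^{WD}a^{WD}=a^{WD}bb^{WD}$ is needed.

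For the second equation I would write $(ab)^{m+1}x=a^{m+1}b^{m+1}b^{WD}a^{WD}$, reduce $b^{m+1}b^{WD}=b^m$, and then---crucially---use that the pure powers commute, $a^{m+1}b^m=b^ma^{m+1}$, to bring $a^{m+1}$ adjacent to $a^{WD}$; then $a^{m+1}a^{WD}=a^m$ yields $b^ma^m=a^mb^m=(ab)^m$. The third equation is handled symmetrically: $x(ab)^{m+1}=b^{WD}a^{WD}a^{m+1}b^{m+1}$, reduce $a^{WD}a^{m+1}=a^m$, commute $a^mb^{m+1}=b^{m+1}a^m$, and apply $b^{WD}b^{m+1}=b^m$ to reach $b^ma^m=(ab)^m$.

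The subtle point---and the reason the seemingly asymmetric hypothesis is enough---is that in the two index equations the outer inverse $a^{WD}$ (respectively $b^{WD}$) never has to be commuted past $b$ (respectively $a$): after factoring $(ab)^{m+1}=a^{m+1}b^{m+1}$, one freely commutes the \emph{pure} powers of $a$ and $b$ so that each outer inverse meets a high power of its own base element, where the index relations of Definition \ref{am1} absorb it. Consequently no relation of the form $a^{WD}b=ba^{WD}$ is required, and the single cross relation $bb^{WD}a^{WD}=a^{WD}bb^{WD}$ is called upon only to disentangle the middle product in the inner-inverse equation $(ab)x(ab)=ab$. I expect that first equation to be the one step demanding genuine care; the index equations are then essentially bookkeeping with the commutativity of $a$ and $b$.
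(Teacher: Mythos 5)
Your proposal is correct and follows essentially the same route as the paper: a direct verification that $x=b^{WD}a^{WD}$ satisfies the three defining equations of Definition \ref{am1} for $ab$ at the index $m=\max\{ind(a),ind(b)\}$, using $(ab)^n=a^nb^n$, the absorbed index relations $a^{m+1}a^{WD}=a^m$, $b^{WD}b^{m+1}=b^m$, etc., and invoking the cross hypothesis $bb^{WD}a^{WD}=a^{WD}bb^{WD}$ exactly once, in the inner-inverse equation $(ab)x(ab)=ab$. Your observation that the two index equations need only commutativity of the pure powers of $a$ and $b$ is accurate and matches the intended argument.
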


% \begin{proof}
% Suppose $k=max\{ind(a),ind(b)\}$. We have $aa^{WD}a=a$, $a^{WD}a^{k+1}=a^k$, $a^{k+1}a^{WD}=a^k$, and $bb^{WD}b=b$, $b^{WD}b^{k+1}=b^k$, $b^{k+1}b^{WD}=b^k.$ Now, 
% \begin{align}\label{e5.11}
%     abb^{WD}a^{WD}ab&=aa^{WD}bb^{WD}ab\nonumber\\
%     &=aa^{WD}bb^{WD}ba\nonumber\\
%     &=aa^{WD}ba\nonumber\\
%     &=aa^{WD}ab\nonumber\\
%     &=ab,
% \end{align}
% \begin{align}\label{e5.2}
%     b^{WD}a^{WD}(ab)^{k+1}&=b^{WD}a^{WD}a^{k+1}b^{k+1}\nonumber\\
%     &=b^{WD}a^{k}b^{k+1}\nonumber\\
%     &=b^{WD}b^{k+1}a^k\nonumber\\
%     &=b^ka^k\nonumber\\
%     &=a^kb^k\nonumber\\
%     &=(ab)^k,
%     \end{align}
%     and
%     \begin{align}\label{e5.3}
%     (ab)^{k+1}b^{WD}a^{WD}&=a^{k+1}b^{k+1}b^{WD}a^{WD}\nonumber\\
%     &=a^{k+1}b^ka^{WD}\nonumber\\
%     &=b^ka^{k+1}a^{WD}\nonumber\\
%     &=b^ka^k\nonumber\\
%     &=(ab)^k.
% \end{align}
% From  \eqref{e5.11}, \eqref{e5.2} and \eqref{e5.3}, we get $(ab)^{WD}=b^{WD}a^{WD}.$

% \end{proof}
The next result can be proved the steps as in Theorem \ref{t5.2} following similarly.
\begin{theorem}\label{th5.2}
Let $a,b\in R^{WD}$. If $ab=ba$ and $ba^{WD}a=a^{WD}ab$, then $(ab)^{WD}=b^{WD}a^{WD}.$
\end{theorem}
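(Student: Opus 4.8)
The plan is to verify the three defining equations of Definition~\ref{am1} directly for the candidate inverse $x = b^{WD}a^{WD}$, using the commutativity hypothesis $ab=ba$ together with the conjugate hypothesis $ba^{WD}a = a^{WD}ab$. Let $k$ be a common weak Drazin index large enough to serve for both $a$ and $b$ (take the maximum of the two indices). First I would establish the product equation $(ab)x(ab) = ab$, i.e.\ $ab\,b^{WD}a^{WD}\,ab = ab$. The idea is to push the outer factors inward: using $ab=ba$ we can rearrange the middle so that the hypothesis $ba^{WD}a = a^{WD}ab$ lets an $a^{WD}$ interact with an $a$ to produce an idempotent $a^{WD}a$ (from $aa^{WD}a=a$) and similarly $bb^{WD}$, after which the defining relations $aa^{WD}a=a$ and $bb^{WD}b=b$ collapse the expression back to $ab$.

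Next I would verify the two ``power'' equations $(ab)^{k+1}x = (ab)^k$ and $x(ab)^{k+1}=(ab)^k$. Because $ab=ba$, we have $(ab)^{m} = a^m b^m$ for every $m$, so these become $a^{k+1}b^{k+1}b^{WD}a^{WD} = a^k b^k$ and $b^{WD}a^{WD}a^{k+1}b^{k+1} = a^k b^k$. For the first, I would commute $a^{WD}$ leftward past the block $b^{k+1}b^{WD}$ (here the hypothesis $ba^{WD}a=a^{WD}ab$, iterated, is what licenses moving $a^{WD}$ through powers of $b$), apply $b^{k+1}b^{WD}=b^k$ from Definition~\ref{am1} for $b$, and then apply $a^{k+1}a^{WD}=a^k$ from Definition~\ref{am1} for $a$, yielding $a^k b^k$. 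The second equation is handled symmetrically, using $a^{WD}a^{k+1}=a^k$ and $b^{WD}b^{k+1}=b^k$ after the analogous commutation step.

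The main obstacle I anticipate is the bookkeeping of the commutation step: moving $a^{WD}$ (or $a^{WD}a$) across an entire power $b^{k+1}$ requires iterating the single hypothesis $ba^{WD}a=a^{WD}ab$, and one must check that the intermediate expressions remain of a form to which the hypothesis applies at each stage. Concretely, the cleanest route is likely to prove the auxiliary identity $b^{m}a^{WD}a = a^{WD}a\,b^{m}$ for all $m\ge 1$ by induction (equivalently, that $a^{WD}a$ commutes with $b$, hence with every power of $b$), since $ba^{WD}a=a^{WD}ab$ only states commutation with $b$ after one factor of $a$ is absorbed; establishing the right commuting idempotent and verifying it is preserved under multiplication by $a$ and $b$ is where the care is needed. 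Once that commuting-idempotent lemma is in hand, the three verifications above reduce to routine substitutions. Since Theorem~\ref{th5.2} is asserted to follow ``the steps as in Theorem~\ref{t5.2},'' I would first confirm that the hypothesis $ba^{WD}a=a^{WD}ab$ plays exactly the structural role that $bb^{WD}a^{WD}=a^{WD}bb^{WD}$ played there, namely guaranteeing that the relevant idempotent built from $a^{WD}$ commutes with $b$, and then transcribe the argument accordingly.
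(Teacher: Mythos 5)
Your proposal is correct and takes essentially the same approach as the paper, which proves this result by the same direct verification of the three WD equations for the candidate $b^{WD}a^{WD}$ used for Theorem~\ref{t5.2}, with the hypothesis $ba^{WD}a=a^{WD}ab$ (i.e., the idempotent $a^{WD}a$ commutes with $b$ and hence with its powers) playing exactly the structural role you identify. The computations do close up as you predict --- e.g.\ $ab\,b^{WD}a^{WD}ab=ab\,b^{WD}(ba^{WD}a)=a(bb^{WD}b)a^{WD}a=a(a^{WD}ab)=ab$ --- so the plan is sound.
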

   The forward-order law involving a WD inverse is presented below.
\begin{theorem}\label{t5.3}
Let $a,b\in R^{WD}$  and $k=max\{ind(a), ind(b)\}.$  If $ab=ba$ and $b^{WD}ba=ab^{WD}b$, then $(ab)^{WD}=a^{WD}b^{WD}.$
\end{theorem}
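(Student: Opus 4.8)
The plan is to show directly that $x := a^{WD}b^{WD}$ satisfies the three defining relations of a WD inverse of $ab$ from Definition \ref{am1}, namely $(ab)x(ab) = ab$, $(ab)^{k+1}x = (ab)^k$ and $x(ab)^{k+1} = (ab)^k$, with the common index $k = \max\{ind(a), ind(b)\}$. Since a WD inverse need not be unique, verifying these three equations is exactly what the assertion $(ab)^{WD} = a^{WD}b^{WD}$ requires. As a preliminary bookkeeping step I would first observe that if $ind(a) = m \le k$ then $a^{k+1}a^{WD} = a^{k-m}(a^{m+1}a^{WD}) = a^k$ and likewise $a^{WD}a^{k+1} = a^k$, and symmetrically for $b$; hence both $a$ and $b$ may be assumed to satisfy all of their WD relations with the single index $k$. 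Write $p = a^{WD}$ and $q = b^{WD}$ for brevity.

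Next I would reformulate the two hypotheses as commutation rules. The relation $ab = ba$ gives that $a$ and $b$ and all their powers commute, while $b^{WD}ba = ab^{WD}b$ says precisely that $a$ commutes with $qb = b^{WD}b$. From the latter it follows immediately, by induction, that $a^{j}(qb) = (qb)a^{j}$ for every $j \ge 1$. These two facts are the only reordering tools available, and it is worth keeping in mind that a WD inverse is merely an inner inverse together with the two power relations (it need not satisfy $xax = x$), so no outer-inverse identity may be invoked.

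With these in hand the three verifications proceed as follows. For $(ab)\,pq\,(ab) = ab$ I would use $qab = q(ba) = (qb)a = a(qb) = aqb$ to rewrite $abpqab = abp(aqb)$, then collapse $abpa = b(apa) = ba = ab$ via $apa = a$, and finish with $abqb = a(bqb) = ab$ via $bqb = b$. For $(ab)^{k+1}pq = (ab)^k$ no cross-commutation is needed: writing $(ab)^{k+1} = b^{k+1}a^{k+1}$ and applying $a^{k+1}p = a^k$, then commuting $a^k$ past $b^{k+1}$ and applying $b^{k+1}q = b^k$, yields $a^kb^k = (ab)^k$.

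The delicate one, and what I expect to be the main obstacle, is the third relation $pq(ab)^{k+1} = (ab)^k$: here $p$ and $q$ sit on the ``wrong'' side of the powers $a^{k+1}$ and $b^{k+1}$, so the inner/power relations cannot be applied until $q$ has been transported past $a^{k+1}$, and this is exactly where the hypothesis becomes indispensable. I would split $(ab)^{k+1} = a^{k+1}b\cdot b^{k}$ and compute $q a^{k+1} b = q(ba^{k+1}) = (qb)a^{k+1} = a^{k+1}(qb)$, using the derived commutation $a^{k+1}(qb) = (qb)a^{k+1}$; this folds $q$ into the combination $qb$ so that $pa^{k+1} = a^k$ and finally $qb^{k+1} = b^k$ can be invoked, giving $pq(ab)^{k+1} = a^k(qb)b^k = a^k q b^{k+1} = a^k b^k = (ab)^k$. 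Once all three equations are checked, $a^{WD}b^{WD}$ is a WD inverse of $ab$ with index $k$, which is the claim.
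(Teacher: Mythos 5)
Your proof is correct and takes essentially the same approach as the paper: a direct verification of the three defining equations $(ab)x(ab)=ab$, $(ab)^{k+1}x=(ab)^k$, $x(ab)^{k+1}=(ab)^k$ for $x=a^{WD}b^{WD}$, using $ab=ba$ together with the hypothesis that $b^{WD}b$ commutes with $a$ (hence with all powers of $a$) to move $b^{WD}$ past $a^{k+1}$ in the third equation. Your preliminary observation that the WD relations of $a$ and $b$ may be upgraded to the common index $k=\max\{ind(a),ind(b)\}$ is also the right bookkeeping step and is handled correctly.
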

% \begin{proof} By  Definition \ref{am1}, 
% $aa^{WD}a=a$, $a^{WD}a^{k+1}=a^k$, $a^{k+1}a^{WD}=a^k$, and $bb^{WD}b=b$, $b^{WD}b^{k+1}=b^k$, $b^{k+1}b^{WD}=b^k.$ Now, 
% \begin{align}\label{e5.4}
%     aba^{WD}b^{WD}ab&=baa^{WD}b^{WD}ba\nonumber\\
%     &=baa^{WD}ab^{WD}b\nonumber\\
%     &=bab^{WD}b\nonumber\\
%     &=abb^{WD}b\nonumber\\
%     &=ab,
%     \end{align}
%     \begin{align}\label{e5.5}
%     a^{WD}b^{WD}(ab)^{k+1}&=a^{WD}b^{WD}b^{k+1}a^{k+1}\nonumber\\
%     &=a^{WD}b^{k}a^{k+1}\nonumber\\
%     &=a^{WD}a^{k+1}b^k\nonumber\\
%     &=a^kb^k\nonumber\\
%     &=(ab)^k,
%     \end{align}
%     and 
%     \begin{align}\label{e5.6}
%     (ab)^{k+1}a^{WD}b^{WD}&=b^{k+1}a^{k+1}a^{WD}b^{WD}\nonumber\\
%     &=b^{k+1}a^kb^{WD}\nonumber\\
%     &=a^kb^{k+1}b^{WD}\nonumber\\
%     &=a^kb^k\nonumber\\
%     &=(ab)^k.
% \end{align}
% From \eqref{e5.4}, \eqref{e5.5} and \eqref{e5.6}, we get $(ab)^{WD}=a^{WD}b^{WD}.$
% \end{proof}
The following example demonstrates Theorem \ref{t5.2}, Theorem \ref{th5.2} and Theorem \ref{t5.3}.
% \begin{example}\label{ex6.1}
% Let $R=\dfrac{\mathbb{R}^{3\times 3}[x]}{< \negmedspace x^2-1\negmedspace >}$ with conjugate involution. Suppose $A=P\begin{bmatrix}
% 2&0&0\\0&1&0\\0&0&0\end{bmatrix}P^{-1}\\+< \negmedspace x^2-1\negmedspace > \text{  and } B=P\begin{bmatrix}
% 3&0&0\\0&2&0\\0&0&0\end{bmatrix}P^{-1}+< \negmedspace x^2-1\negmedspace >\in R$. 
% Then, $AB=P\begin{bmatrix}6&0&0\\0&2&0\\0&0&0\end{bmatrix}P^{-1}+< \negmedspace x^2-1\negmedspace >$ and we have 
% $A^{WD}=P\begin{bmatrix}
% 1/2&0&0\\0&1&0\\0&0&y_1\end{bmatrix}P^{-1}+< \negmedspace x^2-1\negmedspace >$ and $B^{WD}=P\begin{bmatrix}1/3&0&0\\0&1/2&0\\0&0&y_2\end{bmatrix}P^{-1}\\+< \negmedspace x^2-1\negmedspace >$, where $y_1, y_2\in \mathbb{R}$ are arbitrary. Now, $(AB)^{WD}=P\begin{bmatrix}1/6&0&0\\0&1/2&0\\0&0&y_3\end{bmatrix}P^{-1}+< \negmedspace x^2-1\negmedspace >$, where $y_3\in\mathbb{R}$ is arbitrary and we can always choose $y_3=y_1y_2$.
% Hence, $(AB)^{WD}=A^{WD}B^{WD}=B^{WD}A^{WD}.$
% \end{example}
Next example shows that the given conditions in Theorem \ref{t5.2} and Theorem \ref{th5.2} are sufficient but not necessary.
% \begin{example}
% Let $R=\{a_0+a_1i+a_2j+a_3k: ~a_0,a_1,a_2,a_3\in\mathbb{R}\}$, where $i^2=j^2=k^2=-1$ and $ij=-ji=k, jk=-kj=i,ki=-ik=j$, be a ring (quaternion polynomial ring) with conjugate involution. Let $\dfrac{-1}{a_1}i$ be a WD inverse of  $a_1i$ and $\dfrac{-1}{a_2}j$ be a WD inverse of  $a_2j$, where $a_1,a_2\neq 0$. Then, $(a_1ia_2j)^{WD}=(a_1a_2ij)^{WD}=(a_1a_2k)^{WD}=\dfrac{-1}{a_1a_2}k$, and 
% $(a_2j)^{WD}(a_1i)^{WD}=\dfrac{(-1)}{a_2}j\dfrac{(-1)}{a_1}i=\dfrac{1}{a_1a_2}ji=\dfrac{-1}{a_1a_2}k.$ Hence, $(a_1ia_2j)^{WD}=(a_2j)^{WD}(a_1i)^{WD}$. But $a_1ia_2j\neq a_2ja_1i.$
% \end{example}
   The triple reverse-order law involving a WD inverse is presented below.
\begin{theorem}\label{thme5.3}
Let $a,b,c\in R^{WD}$  commute  with each other,  and $k=max\{ind(a), ind(b), ind(c)\}.$  If $cc^{WD}b=bcc^{WD}$ and $c^{WD}b^{WD}a=ac^{WD}b^{WD}$, then $(abc)^{WD}=c^{WD}b^{WD}a^{WD}.$
\end{theorem}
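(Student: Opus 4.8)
The plan is to reduce the three-factor statement to a single application of the two-factor reverse-order law already proved in Theorem \ref{t5.2}, by grouping $abc = (ab)c$ and verifying that the pair $(ab,\,c)$ satisfies the two hypotheses of that theorem. First I would establish that $ab \in R^{WD}$: since $a$ and $b$ commute and $(ab)^{WD}=b^{WD}a^{WD}$ holds by Theorem \ref{t5.2} (whose hypotheses I must check are implied by the present commutation assumptions among $a,b,c$), the element $ab$ is indeed WD invertible with a known WD inverse. Once $ab \in R^{WD}$ and $c\in R^{WD}$, the candidate for $(abc)^{WD}$ is $c^{WD}(ab)^{WD} = c^{WD}b^{WD}a^{WD}$, which is exactly the claimed expression.

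The two conditions I must verify for the pair $(ab,c)$ are: (1) the commutativity $(ab)c = c(ab)$, and (2) the compatibility condition $c\,c^{WD}(ab)^{WD} = (ab)^{WD}\,c\,c^{WD}$. Condition (1) is immediate from the hypothesis that $a,b,c$ commute with each other. For condition (2), substituting $(ab)^{WD}=b^{WD}a^{WD}$ turns the requirement into $c\,c^{WD}b^{WD}a^{WD} = b^{WD}a^{WD}c\,c^{WD}$. Here I would use the two stated hypotheses $cc^{WD}b = b cc^{WD}$ and $c^{WD}b^{WD}a = a c^{WD}b^{WD}$, together with the commutation relations, to slide $cc^{WD}$ across the product $b^{WD}a^{WD}$. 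The idempotent $cc^{WD}$ commuting with $b$ should be promoted to commuting with $b^{WD}$ (and similarly the second hypothesis should let $cc^{WD}$ pass the factor $a^{WD}$), after which the two sides match.

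The main obstacle will be precisely that promotion step: the hypotheses are phrased in terms of $b$ and $a$ (and of $c^{WD}b^{WD}$), whereas condition (2) needs commutation with $b^{WD}$ and $a^{WD}$ individually. I expect to bridge this by expressing $b^{WD}$ and $a^{WD}$ through the defining WD relations $a^{WD}a^{k+1}=a^k$, $a^{k+1}a^{WD}=a^k$ (and the analogues for $b,c$), so that commutation with the base elements $a,b$ propagates to their WD inverses; the uniform index $k=\max\{ind(a),ind(b),ind(c)\}$ is what lets me carry these power identities simultaneously for all three elements. Once that propagation is justified, I would assemble the three defining equations for a WD inverse of $abc$ — namely $(abc)(c^{WD}b^{WD}a^{WD})(abc)=abc$, $(abc)^{k+1}(c^{WD}b^{WD}a^{WD})=(abc)^k$, and $(c^{WD}b^{WD}a^{WD})(abc)^{k+1}=(abc)^k$ — either directly from the two-factor result applied to $(ab,c)$ or by a short verification using the commutation relations, and conclude $(abc)^{WD}=c^{WD}b^{WD}a^{WD}$.
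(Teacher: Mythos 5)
Your plan has a genuine gap at both places where you invoke Theorem \ref{t5.2}. First, to get $(ab)^{WD}=b^{WD}a^{WD}$ for the inner pair you would need the hypothesis $bb^{WD}a^{WD}=a^{WD}bb^{WD}$ of Theorem \ref{t5.2}; this is not among the assumptions of the present theorem and does not follow from the mere pairwise commutativity of $a,b,c$ (the paper itself stresses that the extra condition in Theorem \ref{t5.2} is a genuine sufficient condition beyond $ab=ba$, not a consequence of it). Second, for the outer pair $(ab,c)$ you need $cc^{WD}(ab)^{WD}=(ab)^{WD}cc^{WD}$, i.e.\ $cc^{WD}b^{WD}a^{WD}=b^{WD}a^{WD}cc^{WD}$, and you propose to obtain it by ``promoting'' the hypothesis $cc^{WD}b=bcc^{WD}$ to commutation of $cc^{WD}$ with $b^{WD}$. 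That promotion is exactly the step that fails: a WD inverse is not unique, need not lie in $comm^{2}(b)$, and is not a polynomial or limit in $b$, so an element commuting with $b$ need not commute with $b^{WD}$. The defining relations $b^{WD}b^{k+1}=b^{k+1}b^{WD}=b^{k}$ only control $b^{WD}$ against high powers of $b$; they give no handle on $cc^{WD}b^{WD}$ versus $b^{WD}cc^{WD}$. (For the genuine Drazin inverse this promotion works because $b^{d}\in comm^{2}(b)$; that is precisely the property one loses in passing to weak Drazin inverses.) You correctly identified this as the main obstacle, but the obstacle is not surmountable by the route you describe, so the reduction to the two-factor theorem collapses.

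The viable argument is a direct verification, in the same style as the proof of Theorem \ref{t5.2}, that $x=c^{WD}b^{WD}a^{WD}$ satisfies the three defining equations for $abc$, using each hypothesis only where the computation actually calls for it. For instance, since $a,b,c$ commute one has $(abc)^{k+1}=a^{k+1}b^{k+1}c^{k+1}$, and the identity $c^{WD}b^{WD}a=ac^{WD}b^{WD}$ iterates to $c^{WD}b^{WD}a^{k}=a^{k}c^{WD}b^{WD}$, which yields
\begin{align*}
c^{WD}b^{WD}a^{WD}(abc)^{k+1}
&=c^{WD}b^{WD}a^{k}b^{k+1}c^{k+1}
=a^{k}c^{WD}b^{WD}b^{k+1}c^{k+1}
=a^{k}c^{WD}b^{k}c^{k+1}\\
&=a^{k}c^{WD}c^{k+1}b^{k}
=a^{k}c^{k}b^{k}=(abc)^{k},
\end{align*}
with no appeal to commutation of $b^{WD}$ or $a^{WD}$ with anything beyond what is assumed. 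The remaining two equations, $(abc)x(abc)=abc$ and $(abc)^{k+1}x=(abc)^{k}$, are where the hypothesis $cc^{WD}b=bcc^{WD}$ enters, again by direct manipulation rather than by appealing to the two-factor theorem. I recommend you abandon the reduction and carry out this verification.
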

% \begin{proof}
% We will show that  ${(abc)^{WD}}=c^{WD}b^{WD}a^{WD}$ by using the definition of WD inverse. Now,
% \begin{align}\label{ea5.11}
%     abcc^{WD}b^{WD}a^{WD}abc&=bcac^{WD}b^{WD}a^{WD}abc\nonumber\\
%     &=bcc^{WD}b^{WD}aa^{WD}abc\nonumber\\
%     &=bcc^{WD}b^{WD}abc\nonumber\\
%     &=cc^{WD}bb^{WD}bca\nonumber\\
%     &=cc^{WD}bca\nonumber\\
%     &=cc^{WD}cab\nonumber\\
%     &=abc,
% \end{align}
% \begin{align}\label{ea5.2}
%     c^{WD}b^{WD}a^{WD}(abc)^{k+1}&=c^{WD}b^{WD}a^{WD}a^{k+1}b^{k+1}c^{k+1}\nonumber\\
%     &=c^{WD}b^{WD}a^{k}b^{k+1}c^{k+1}\nonumber\\
%     &=c^{WD}b^{WD}b^{k+1}c^{k+1}a^k\nonumber\\
%     &=c^{WD}c^{k+1}b^ka^k\nonumber\\
%     &=c^{k}a^kb^k\nonumber\\
%     &=(abc)^k,
%     \end{align}
%     and
%     \begin{align}\label{ea5.3}
%     (abc)^{k+1}b^{WD}a^{WD}&=a^{k+1}b^{k+1}c^{k+1}c^{WD}b^{WD}a^{WD}\nonumber\\
%     &=a^{k+1}b^{k+1}c^kb^{WD}a^{WD}\nonumber\\
%       &=c^{k}a^{k+1}b^{k+1}b^{WD}a^{WD}\nonumber\\
%     &=c^{k}a^{k+1}b^ka^{WD}\nonumber\\
%     &=c^kb^ka^{k+1}a^{WD}\nonumber\\
%     &=c^kb^ka^k\nonumber\\
%     &=(abc)^k.
% \end{align}
% From  \eqref{ea5.11}, \eqref{ea5.2} and \eqref{ea5.3}, we get $(abc)^{WD}=c^{WD}b^{WD}a^{WD}.$
% \end{proof}
Similarly, the triple forward law can be proved, and is stated below.
\begin{theorem}\label{thme5.3}
Let $a,b,c\in R^{WD}$ be commute  with each others  and $k=max\{ind(a), ind(b), ind(c)\}.$  If $aa^{WD}b=baa^{WD}$ and $a^{WD}b^{WD}c=ca^{WD}b^{WD}$, then $(abc)^{WD}=a^{WD}b^{WD}c^{WD}.$
\end{theorem}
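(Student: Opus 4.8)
The plan is to verify directly that $x:=a^{WD}b^{WD}c^{WD}$ satisfies the three defining equations of a WD inverse of $d:=abc$ from Definition \ref{am1}, namely $dxd=d$, $d^{k+1}x=d^k$ and $xd^{k+1}=d^k$, with $k=\max\{ind(a),ind(b),ind(c)\}$. Because $a,b,c$ commute pairwise we immediately have $d^m=a^mb^mc^m$ for every $m$, and each factor inherits the WD identities at the common index $k$: $a^{k+1}a^{WD}=a^k=a^{WD}a^{k+1}$ (and likewise for $b,c$) together with $aa^{WD}a=a$, since $a^{m+1}a^{WD}=a^{m-ind(a)}a^{ind(a)+1}a^{WD}=a^m$ for $m\ge ind(a)$. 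The two hypotheses supply exactly the non-commutativity one is allowed to exploit: from $a^{WD}b^{WD}c=ca^{WD}b^{WD}$ the block $a^{WD}b^{WD}$ commutes with every power of $c$, and from $aa^{WD}b=baa^{WD}$ the idempotent $aa^{WD}$ commutes with every power of $b$.

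First I would handle $d^{k+1}x=d^k$. Writing $d^{k+1}x=a^{k+1}b^{k+1}c^{k+1}\,a^{WD}b^{WD}c^{WD}$, I slide $c^{k+1}$ to the right through the block $a^{WD}b^{WD}$ using the first hypothesis and collapse it against $c^{WD}$ via $c^{k+1}c^{WD}=c^k$, reducing the identity to $a^{k+1}b^{k+1}\,a^{WD}b^{WD}=a^kb^k$. This is precisely the two-variable forward-order statement for the commuting pair $a,b$, discharged exactly as in Theorem \ref{t5.3}: using $ab=ba$ to shuffle the plain factors and the second hypothesis (that $aa^{WD}$ commutes with powers of $b$) to bring $a^{WD}$ against $a^{k+1}$, one gets $a^{k+1}b^{k+1}a^{WD}=a^kb^{k+1}$ and then $a^kb^{k+1}b^{WD}=a^kb^k$. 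Multiplying the surviving $c^k$ back in yields $d^{k+1}x=a^kb^kc^k=d^k$.

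The remaining two equations follow the same ``peel off $c$, then invoke the two-variable forward law on $a,b$'' scheme. For $xd^{k+1}=d^k$ I would run the mirror-image sliding, now moving $c^{k+1}$ leftward past $a^{WD}b^{WD}$ (again licensed by the first hypothesis, after commuting $c^{k+1}$ past the plain $a^{k+1}b^{k+1}$) and collapsing via $c^{WD}c^{k+1}=c^k$, reducing to $a^{WD}b^{WD}a^{k+1}b^{k+1}=a^kb^k$, which is the left-hand half of $(ab)^{WD}=a^{WD}b^{WD}$ from Theorem \ref{t5.3}. For $dxd=d$ the same slides together with $cc^{WD}c=c$, $bb^{WD}b=b$ and $aa^{WD}a=a$ reduce the middle to the pair identity $(ab)(a^{WD}b^{WD})(ab)=ab$. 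Throughout, the argument is a faithful mirror of the reverse triple-order law proved just above, with Theorem \ref{t5.3} replacing its reverse-order counterpart Theorem \ref{t5.2}.

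The main obstacle is entirely the non-commutative bookkeeping: WD inverses are not central, so the argument must be arranged so that at every step one moves only one of the two blocks the hypotheses certify, namely $a^{WD}b^{WD}$ past powers of $c$ and $aa^{WD}$ past powers of $b$. This forces the order of operations (strip $c$ first, then reduce the $ab$-pair) and makes the left-multiplication equation $xd^{k+1}=d^k$ the most delicate, since there the inverses sit on the far left and must be threaded through the plain powers using only block commutations. A subsidiary point to nail down is that the hypotheses propagate to powers — commuting with $c$ gives commuting with $c^m$, and $aa^{WD}b=baa^{WD}$ gives $aa^{WD}b^m=b^maa^{WD}$ — and that the single index $k=\max\{ind(a),ind(b),ind(c)\}$ is legitimate for $abc$, which follows from Corollary \ref{c3.1} and the power identities noted above.
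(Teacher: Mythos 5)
Your proposal is correct and follows essentially the same route the paper intends: the paper omits the proof, remarking only that the triple forward-order law is proved "similarly" to the triple reverse-order law (i.e., by direct verification of the three WD equations at index $k=\max\{ind(a),ind(b),ind(c)\}$, which is exactly your computation, and indeed the forward statement is the reverse one with $a$ and $c$ interchanged). Your key reductions — propagating the two commutation hypotheses to powers, collapsing $c^{k+1}c^{WD}$ after sliding past $a^{WD}b^{WD}$, and settling the residual pair identities $a^{k+1}b^{k+1}a^{WD}b^{WD}=a^kb^k$, $a^{WD}b^{WD}a^{k+1}b^{k+1}=a^kb^k$ and $ab\,a^{WD}b^{WD}ab=ab$ via $ab=ba$ and $aa^{WD}b^m=b^maa^{WD}$ — all check out.
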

If   $a^{WD}ab^{WD}=a^{WD}$  and $a^{WD}bb^{WD}=b^{WD}$, then $a^{WD}(a+b)b^{WD}=a^{WD}ab^{WD}+a^{WD}bb^{WD}=a^{WD}+b^{WD}$, i.e., $a^{WD}(a+b)b^{WD}=a^{WD}+b^{WD}$. If the absorption law  holds for  WD inverse, i.e., $a^{WD}(a+b)b^{WD}=a^{WD}+b^{WD}$, then a WD inverse satisfies a few conditions mentioned in the next result.

% \begin{theorem}\label{t5.4}
% Let $a,b\in R^{WD}$  and $k=max\{ind(a), ind(b)\}.$  Then,  $a^{WD}(a+b)b^{WD}=a^{WD}+b^{WD}$ if and only if   $a^{WD}ab^{WD}=a^{WD}$  and $a^{WD}bb^{WD}=b^{WD}$.
% \end{theorem}
\begin{theorem}\label{t5.4}
Let $a,b\in R^{WD}$  and $k=max\{ind(a), ind(b)\}.$  If  $a^{WD}(a+b)b^{WD}=a^{WD}+b^{WD}$, then $aa^{WD}bb^{WD}=aa^{WD}$, $a^{WD}ab^{WD}b=b^{WD}b$, $a^kbb^{WD}=a^k$ and $a^{WD}ab^k=b^k$.
\end{theorem}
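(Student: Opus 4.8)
The plan is to work directly from the absorption-law hypothesis and peel off the four conclusions one at a time by judicious multiplication. First I would expand the left-hand side of $a^{WD}(a+b)b^{WD}=a^{WD}+b^{WD}$ by distributivity, rewriting the hypothesis as the single identity
\begin{equation*}
a^{WD}ab^{WD}+a^{WD}bb^{WD}=a^{WD}+b^{WD}. \tag{$\star$}
\end{equation*}
Every one of the four asserted equalities will then follow by left- or right-multiplying $(\star)$ by a suitably chosen element and invoking the defining equations of the WD inverse from Definition \ref{am1}, namely $axa=a$, $bxb=b$, and the power relations $a^{k+1}a^{WD}=a^k$, $b^{WD}b^{k+1}=b^k$.

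Concretely, for the first identity $aa^{WD}bb^{WD}=aa^{WD}$ I would left-multiply $(\star)$ by $a$ and use $aa^{WD}a=a$ to collapse the first summand to $ab^{WD}$, which then cancels the matching term $ab^{WD}$ appearing on the right; what survives is exactly $aa^{WD}bb^{WD}=aa^{WD}$. For the second identity $a^{WD}ab^{WD}b=b^{WD}b$ I would instead right-multiply $(\star)$ by $b$ and use $bb^{WD}b=b$ to turn the second summand into $a^{WD}b$, which cancels the corresponding term $a^{WD}b$ on the right, leaving $a^{WD}ab^{WD}b=b^{WD}b$. The third and fourth identities follow the same template but call on the power relations: left-multiplying $(\star)$ by $a^{k+1}$ and applying $a^{k+1}a^{WD}=a^{k}$ makes both the first summand and the right-hand term reduce to $a^{k+1}b^{WD}$, which cancel to give $a^{k}bb^{WD}=a^{k}$; symmetrically, right-multiplying $(\star)$ by $b^{k+1}$ and applying $b^{WD}b^{k+1}=b^{k}$ yields $a^{WD}ab^{k}=b^{k}$.

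The one point that needs care, and really the only obstacle, is that $k=\max\{ind(a),ind(b)\}$ may strictly exceed the individual weak Drazin indices, so $a^{k+1}a^{WD}=a^{k}$ and $b^{WD}b^{k+1}=b^{k}$ are not literally the defining equations of Definition \ref{am1}. I would dispose of this at the outset by noting that if $ind(a)=j\le k$ then $a^{k+1}a^{WD}=a^{k-j}\bigl(a^{j+1}a^{WD}\bigr)=a^{k-j}a^{j}=a^{k}$, and symmetrically $b^{WD}b^{k+1}=b^{k}$; thus the required power identities are valid for every exponent at least as large as the corresponding index. With these preliminary observations settled, each of the four cancellations is a one-line computation, and the theorem follows.
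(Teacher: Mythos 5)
Your proposal is correct and follows essentially the same route the paper takes: distribute the hypothesis into $a^{WD}ab^{WD}+a^{WD}bb^{WD}=a^{WD}+b^{WD}$, then left- or right-multiply by $a$, $b$, $a^{k+1}$, $b^{k+1}$ respectively, use $aa^{WD}a=a$, $bb^{WD}b=b$ and the power relations to cancel the common additive term. Your preliminary observation that $a^{k+1}a^{WD}=a^{k}$ and $b^{WD}b^{k+1}=b^{k}$ persist for any $k\geq\max\{ind(a),ind(b)\}$ is a point the paper glosses over, and it is handled correctly.
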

% \begin{proof}
% We have 
% \begin{align}\label{e5.1}
%     a^{WD}(a+b)b^{WD}=a^{WD}+b^{WD}.
% \end{align}
% Pre-multiplying by $a^{k+1}$ in equation \eqref{e5.1}, we get
% $$a^{k+1}a^{WD}(a+b)b^{WD}=a^{k+1}a^{WD}+a^{k+1}b^{WD}$$
%   which implies  
% $$ a^k(a+b)b^{WD}=a^{k}+a^{k+1}b^{WD},  \mbox{ i.e., }
%  a^{k+1}b^{WD}+a^kbb^{WD}=a^k+a^{k+1}b^{WD}.$$
% Hence, $a^{k}bb^{WD}=a^k$. Again, pre-multiplying by $a$ in equation \eqref{e5.1}, we have
% $aa^{WD}(a+b)b^{WD}=aa^{WD}+ab^{WD}$   which yields    $ab^{WD}+aa^{WD}bb^{WD}=aa^{WD}+ab^{WD}$. So,
% $ aa^{WD}bb^{WD}=aa^{WD}$.
%  Post-multiplying by $b^{k+1}$ in equation \eqref{e5.1}, we obtain $$a^{WD}(a+b)b^{WD}b^{k+1}=a^{WD}b^{k+1}+b^{WD}b^{k+1}$$ which implies 
%  $$a^{WD}(a+b)b^{k}=a^{WD}b^{k+1}+b^{k}, \mbox{ i.e., }
%  a^{WD}ab^{k}+a^{WD}b^{k+1}=a^{WD}b^{k+1}+b^{k}.$$ We therefore have $a^{WD}ab^{k}=b^k$. Again,  post-multiplying by  $b$ in equation \eqref{e5.1}, we obtain $
%    a^{WD}(a+b)b^{WD}b=a^{WD}b+b^{WD}b$ which yields 
% $$ a^{WD}ab^{WD}b+a^{WD}bb^{WD}b=a^{WD}b+b^{WD}b,  \mbox{ i.e., }  a^{WD}ab^{WD}b+a^{WD}b=a^{WD}b+b^{WD}b.$$ Thus, we have $a^{WD}ab^{WD}b=b^{WD}b$.

% \end{proof}

\begin{corollary}
 Let $a,b\in R$  and $k=max\{ind(a), ind(b)\}.$  If $a^{WD}(a+b)b^{WD}=a^{WD}+b^{WD}$, then
 \begin{enumerate}[(i)]
 \item $b^kR\subseteq a^{WD}R$ and $a^kR=a^kbR.$
 \item $ {^{\circ}(a^{WD})}\subseteq {^{\circ}(b^k)}$ and  ${^{\circ}(a^{k})}={^{\circ}(a^kb)}$.
 \item $Rab^k=Rb^k$ and $Ra^k\subseteq Rb^{WD}$.
 \item $(b^{WD})^{\circ}\subseteq (a^k)^{\circ}$ and $(ab^k)^{\circ}=(b^k)^{\circ}$.
  \end{enumerate}
\end{corollary}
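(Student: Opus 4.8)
The plan is to treat this corollary as a direct harvest of the four identities supplied by Theorem \ref{t5.4}, combined with the annihilator translation recorded in Lemma \ref{t2.10}. Since the hypothesis $a^{WD}(a+b)b^{WD}=a^{WD}+b^{WD}$ is precisely the absorption law assumed there, I may freely invoke
\[
aa^{WD}bb^{WD}=aa^{WD},\quad a^{WD}ab^{WD}b=b^{WD}b,\quad a^{k}bb^{WD}=a^{k},\quad a^{WD}ab^{k}=b^{k}.
\]
The whole argument then reduces to exhibiting the relevant elements as left or right multiples and reading off the consequences.

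First I would establish the ideal inclusions in (i) and (iii), which carry all the content. For $b^{k}R\subseteq a^{WD}R$, the identity $a^{WD}ab^{k}=b^{k}$ rewrites as $b^{k}=a^{WD}(ab^{k})$, so $b^{k}\in a^{WD}R$. For $a^{k}R=a^{k}bR$, the identity $a^{k}bb^{WD}=a^{k}$ gives $a^{k}=(a^{k}b)b^{WD}\in a^{k}bR$, while the reverse inclusion $a^{k}bR\subseteq a^{k}R$ is immediate from $a^{k}b\in a^{k}R$. Symmetrically, $Rb^{k}\subseteq Rab^{k}$ follows by reading $a^{WD}ab^{k}=b^{k}$ as $b^{k}=a^{WD}(ab^{k})\in R(ab^{k})$, together with the trivial inclusion $ab^{k}\in Rb^{k}$; and $Ra^{k}\subseteq Rb^{WD}$ follows from $a^{k}=(a^{k}b)b^{WD}\in Rb^{WD}$.

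Parts (ii) and (iv) are then purely formal. Applying Lemma \ref{t2.10}(i) to $b^{k}R\subseteq a^{WD}R$ yields ${^{\circ}(a^{WD})}\subseteq{^{\circ}(b^{k})}$, and applying it to the two inclusions comprising $a^{k}R=a^{k}bR$ yields ${^{\circ}(a^{k})}={^{\circ}(a^{k}b)}$. Likewise Lemma \ref{t2.10}(iii) turns $Ra^{k}\subseteq Rb^{WD}$ into $(b^{WD})^{\circ}\subseteq(a^{k})^{\circ}$ and turns $Rab^{k}=Rb^{k}$ into $(ab^{k})^{\circ}=(b^{k})^{\circ}$. Note that only parts (i) and (iii) of Lemma \ref{t2.10} are used, so no regularity hypothesis is invoked.

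I do not anticipate a genuine obstacle here; the points demanding care are bookkeeping ones. One must pair each claim with the correct one of the four identities, and for the \emph{equalities} of ideals and of annihilators supply both inclusions, with the trivial direction ($a^{k}b\in a^{k}R$, $ab^{k}\in Rb^{k}$) not to be overlooked. One must also keep the left/right distinction straight: the right-ideal inclusions of (i) feed into Lemma \ref{t2.10}(i) to produce the left-annihilator statements of (ii), while the left-ideal inclusions of (iii) feed into Lemma \ref{t2.10}(iii) to produce the right-annihilator statements of (iv).
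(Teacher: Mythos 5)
Your proposal is correct and is essentially the intended argument: each of the four identities from Theorem \ref{t5.4} is read as an explicit factorization giving the ideal inclusions in (i) and (iii), and Lemma \ref{t2.10}(i) and (iii) (the directions needing no regularity) convert these into the annihilator statements of (ii) and (iv). The bookkeeping, including the trivial inclusions $a^kbR\subseteq a^kR$ and $Rab^k\subseteq Rb^k$ needed for the two equalities, is handled correctly.
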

% \begin{proof}
% \begin{enumerate}[(i)]
%     \item We know that $b^k=a^{WD}ab^k$. So, $b^kR\subseteq a^{WD}ab^kR\subseteq a^{WD}aR\subseteq a^{WD}R$, which implies $b^kR\subseteq a^{WD}R$. We have $a^kbb^{WD}=a^k$. Hence, $a^kR\subseteq a^kbb^{WD}R\subseteq a^{k}bR\subseteq a^{k}R$. So, $a^kR=a^kbR.$ 
%     \item We have $b^kR\subseteq a^{WD}R$, from Lemma \ref{t2.10} (i), we get $ {^{\circ}(a^{WD})}\subseteq {^{\circ}(b^k)}$. And $a^kR=a^kbR$ implies  $a^kR\subseteq a^kbR.$ Again, from Lemma \ref{t2.10} (i), we get ${^{\circ}(a^kb)}\subseteq {^{\circ}(a^{k})}.$ Similarly, from $a^kbR \subseteq a^kR$, we get  ${^{\circ}(a^{k})}\subseteq {^{\circ}(a^kb)}$. Hence, ${^{\circ}(a^{k})}={^{\circ}(a^kb)}$. 
%     \item Similar to part (i).
%     \item Similar to part (ii) by Lemma \ref{t2.10} (iii).
% \end{enumerate}

% \end{proof}
\begin{theorem}\label{dev5.4}
Let $a,b\in R^{WD}$  and $k=max\{ind(a), ind(b)\}.$  If $b^{WD}aa^{WD}=a^{WD}$  and $b^{WD}ba^{WD}=b^{WD}$, then  $b^{WD}(a+b)a^{WD}=a^{WD}+b^{WD}$. Furthermore,     $b^kaa^{WD}=b^k$, $b^{WD}ba^k=a^k$,
    $bb^{WD}aa^{WD}=bb^{WD}$ and $b^{WD}ba^{WD}a=a^{WD}a$.

% \begin{proof}
% We have $a^{WD}ab^{WD}=a^{WD}$  and $a^{WD}bb^{WD}=b^{WD}$. Now, $b^{WD}(a+b)a^{WD}=b^{WD}aa^{WD}+b^{WD}ba^{WD}=a^{WD}+b^{WD}$, i.e, $b^{WD}(a+b)a^{WD}=a^{WD}+b^{WD}$. Further, we have 
% \begin{align}\label{dev2}
%     b^{WD}(a+b)a^{WD}=a^{WD}+b^{WD}.
% \end{align}
% Pre-multiplying by $b^{k+1}$ in equation \eqref{dev2}, we get $    b^{k+1}b^{WD}(a+b)a^{WD}=b^{k+1}a^{WD}+b^{k+1}b^{WD}$, i.e., $    b^{k}(a+b)a^{WD}=b^{k+1}a^{WD}+b^k$, i.e., $b^kaa^{WD}=b^k$. Similarly, if post-multiplying by $a^{k+1}$ in \eqref{dev2}, we get $b^{WD}ba^k=a^k$. Again, pre and post-multipying by $b$ and $a$ in \eqref{dev2}, we get  $bb^{WD}aa^{WD}=bb^{WD}$ and $b^{WD}ba^{WD}a=a^{WD}a$, respectively.
% \end{proof}
\end{theorem}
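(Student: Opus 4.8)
The plan is to first establish the absorption identity $b^{WD}(a+b)a^{WD}=a^{WD}+b^{WD}$ and then to treat it as the single source from which all four ``furthermore'' identities are extracted by one-sided multiplication. Expanding the left-hand side by distributivity gives
$$b^{WD}(a+b)a^{WD}=b^{WD}aa^{WD}+b^{WD}ba^{WD},$$
into which the two hypotheses $b^{WD}aa^{WD}=a^{WD}$ and $b^{WD}ba^{WD}=b^{WD}$ substitute directly to produce $a^{WD}+b^{WD}$. So the absorption law costs nothing beyond the definitions, and the real content lies in the four consequences, which I would prove in a different order than they are listed.

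First I would obtain the two ``idempotent-level'' identities $bb^{WD}aa^{WD}=bb^{WD}$ and $b^{WD}ba^{WD}a=a^{WD}a$ by multiplying the absorption law by a single factor and invoking the outer WD relation $axa=a$ from Definition \ref{am1}. Left-multiplying by $b$ gives $bb^{WD}aa^{WD}+bb^{WD}ba^{WD}=ba^{WD}+bb^{WD}$; since $bb^{WD}b=b$ the second left-hand term collapses to $ba^{WD}$, and cancelling it yields $bb^{WD}aa^{WD}=bb^{WD}$. Symmetrically, right-multiplying by $a$ and using $aa^{WD}a=a$ produces $b^{WD}ba^{WD}a=a^{WD}a$.

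Next I would bootstrap these two identities up to the $k$-th power using the inner WD relations $b^{k+1}b^{WD}=b^k$ and $a^{WD}a^{k+1}=a^k$. Left-multiplying $bb^{WD}aa^{WD}=bb^{WD}$ by $b^k$ and applying $b^{k+1}b^{WD}=b^k$ to both sides gives $b^kaa^{WD}=b^k$; right-multiplying $b^{WD}ba^{WD}a=a^{WD}a$ by $a^k$ and applying $a^{WD}a^{k+1}=a^k$ gives $b^{WD}ba^k=a^k$. This exhausts all five claimed identities.

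The only point deserving care --- and the one I would flag as the main, if modest, obstacle --- is the index bookkeeping: the WD relations for $a$ and $b$ hold a priori at $ind(a)$ and $ind(b)$ individually, whereas the statement uses the common $k=\max\{ind(a),ind(b)\}$. I would therefore first record the elementary fact that the WD defining equations are stable under raising the exponent (if $a^{m+1}a^{WD}=a^m$ and $a^{WD}a^{m+1}=a^m$ hold at $m=ind(a)$, then multiplying through by $a$ shows they persist for every larger exponent, and likewise for $b$), so that $b^{k+1}b^{WD}=b^k$ and $a^{WD}a^{k+1}=a^k$ are genuinely available at the common $k$. With that observation in hand the entire argument reduces to distributivity and cancellation, and no further difficulty remains.
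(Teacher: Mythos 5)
Your proof is correct and takes essentially the same route one would expect here (and that the paper takes): distributivity plus the two hypotheses give the absorption identity for free, and the four ``furthermore'' identities follow by one-sided multiplication combined with the defining WD relations $bb^{WD}b=b$, $aa^{WD}a=a$, $b^{k+1}b^{WD}=b^k$ and $a^{WD}a^{k+1}=a^k$. Your explicit remark that the exponent equations remain valid when the index is raised to the common $k=\max\{ind(a),ind(b)\}$ is a worthwhile detail that the statement itself glosses over; there are no gaps.
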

The next result is in the direction of Theorem  \ref{t4.1}.
\begin{theorem}\label{t3.16}
Let $a,b\in R^{WD\dagger}$. If $ab=ba=0$, $a^*b=ab^*=0$, $ab^{WD}=b^{WD}a=0$, and $a^{WD}b=ba^{WD}=0$, then $(a+b)^{WD\dagger}=a^{WD\dagger}+b^{WD\dagger}$.
\end{theorem}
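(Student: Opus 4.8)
The plan is to build the WDMP additive identity on top of two additive results already in hand: the WD additive property (Theorem \ref{t4.1}) and the Moore--Penrose additive property (Theorem \ref{t2.9}), glued together by the explicit representation $a^{WD\dagger}=a^{WD}aa^{\dagger}$ supplied by Theorem \ref{ad1}. First I would observe that $a,b\in R^{WD\dagger}$ forces $a,b\in R^{WD}\cap R^{\dagger}$. The hypotheses $ab=ba=0$, $ab^{WD}=b^{WD}a=0$ and $a^{WD}b=ba^{WD}=0$ are precisely those of Theorem \ref{t4.1}, so $(a+b)^{WD}=a^{WD}+b^{WD}$; likewise $a^*b=ab^*=0$ are the hypotheses of Theorem \ref{t2.9}, giving $(a+b)^{\dagger}=a^{\dagger}+b^{\dagger}$. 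In particular $a+b\in R^{WD}\cap R^{\dagger}=R^{WD\dagger}$, and by Theorem \ref{ad1} a WDMP inverse of $a+b$ is $(a+b)^{WD\dagger}=(a+b)^{WD}(a+b)(a+b)^{\dagger}$.

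Substituting the two additive formulas turns the target into the single product $(a^{WD}+b^{WD})(a+b)(a^{\dagger}+b^{\dagger})$, which I would expand and prune using the orthogonality relations. The middle factor collapses immediately: $(a^{WD}+b^{WD})(a+b)=a^{WD}a+b^{WD}b$ because $a^{WD}b=0$ and $b^{WD}a=0$. Multiplying on the right by $a^{\dagger}+b^{\dagger}$ then produces the two diagonal terms $a^{WD}aa^{\dagger}$ and $b^{WD}bb^{\dagger}$ --- exactly $a^{WD\dagger}+b^{WD\dagger}$ --- together with the cross terms $a^{WD}ab^{\dagger}$ and $b^{WD}ba^{\dagger}$, and the whole proof reduces to showing these two cross terms vanish.

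The crux, and the step I expect to require the most care, is translating the hypotheses phrased through the involution into statements about the Moore--Penrose inverses, i.e. proving $ab^{\dagger}=0$ and $ba^{\dagger}=0$. Here I would use that $b^{\dagger}b$ is Hermitian to write $b^{\dagger}=b^{\dagger}bb^{\dagger}=(b^{\dagger}b)^*b^{\dagger}=b^*(b^{\dagger})^*b^{\dagger}$; left-multiplying by $a$ and invoking $ab^*=0$ gives $ab^{\dagger}=0$. Taking the involution of $ab^*=0$ yields $ba^*=0$, and the symmetric identity $a^{\dagger}=a^*(a^{\dagger})^*a^{\dagger}$ then gives $ba^{\dagger}=0$. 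With $ab^{\dagger}=0$ and $ba^{\dagger}=0$ the cross terms $a^{WD}ab^{\dagger}$ and $b^{WD}ba^{\dagger}$ are zero, so $(a+b)^{WD\dagger}=a^{WD}aa^{\dagger}+b^{WD}bb^{\dagger}=a^{WD\dagger}+b^{WD\dagger}$, completing the argument. The only genuine subtlety is bookkeeping around non-uniqueness of the WD inverse: I would read the statement as asserting that the particular WDMP inverses built from the chosen WD inverses $a^{WD},b^{WD}$ add up, which is exactly what the representation in Theorem \ref{ad1} delivers.
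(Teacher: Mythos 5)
Your proof is correct and follows essentially the route the paper intends: combine Theorem \ref{t4.1} for $(a+b)^{WD}=a^{WD}+b^{WD}$ and Theorem \ref{t2.9} for $(a+b)^{\dagger}=a^{\dagger}+b^{\dagger}$, pass through the representation $a^{WD\dagger}=a^{WD}aa^{\dagger}$ of Theorem \ref{ad1}, and kill the cross terms via $ab^{\dagger}=ab^{*}(b^{\dagger})^{*}b^{\dagger}=0$ and $ba^{\dagger}=ba^{*}(a^{\dagger})^{*}a^{\dagger}=0$. Your closing remark about non-uniqueness is also the right reading of the statement.
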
 
% \begin{proof}
% From Theorem \ref{t4.1} and Theorem \ref{t2.9}, we have $(a+b)^{WD}=a^{WD}+b^{WD}$ and $(a+b)^{\dagger}=a^{\dagger}+b^{\dagger}$.
%     \begin{align*}
%         (a+b)^{WD\dagger}&=(a+b)^{WD}(a+b)(a+b)^{\dagger}\\
%         &=(a^{WD}+b^{WD})(a+b)(a^{\dagger}+b^{\dagger})\\
%         &=(a^{WD}a+a^{WD}b+b^{WD}a+b^{WD}b)(a^{\dagger}+b^{\dagger})\\
%         &=(a^{WD}a+b^{WD}b)(a^{\dagger}+b^{\dagger})\\
%         &=a^{WD}aa^{\dagger}+a^{WD}ab^{\dagger}+b^{WD}ba^{\dagger}+b^{WD}bb^{\dagger}\\ &=a^{WD}aa^{\dagger}+a^{WD}ab^*(b^{\dagger})^*b^{\dagger}+b^{WD}ba^{*}(a^{\dagger})^*a^{\dagger}+b^{WD}bb^{\dagger}\\
%         &=a^{WD}aa^{\dagger}+b^{WD}bb^{\dagger}\\
%         &=a^{WD\dagger}+b^{WD\dagger}.
%     \end{align*}
% \end{proof}
An immediate consequence of the above result is shown next as a corollary.
\begin{corollary}
Let $a,b\in R^{WD\dagger}$ be  Hermitian. If $ab=ba=0$, $ab^{WD}=b^{WD}a=0$, and $a^{WD}b=ba^{WD}=0$, then $(a+b)^{WD\dagger}=a^{WD\dagger}+b^{WD\dagger}$.
\end{corollary}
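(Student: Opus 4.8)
The plan is to recognize this corollary as a direct specialization of Theorem \ref{t3.16} to the Hermitian case, so that the entire task reduces to checking that the Hermitian hypothesis supplies the one condition appearing in Theorem \ref{t3.16} that is not explicitly listed in the corollary, namely $a^*b=ab^*=0$.

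First I would record that the assumption that $a$ and $b$ are Hermitian means precisely $a^*=a$ and $b^*=b$. Substituting these into the expressions $a^*b$ and $ab^*$ gives $a^*b=ab$ and $ab^*=ab$. Since the hypothesis of the corollary already provides $ab=ba=0$, it follows at once that $a^*b=ab=0$ and $ab^*=ab=0$. Consequently the complete list of hypotheses required by Theorem \ref{t3.16} — that $a,b\in R^{WD\dagger}$, that $ab=ba=0$, that $a^*b=ab^*=0$, that $ab^{WD}=b^{WD}a=0$, and that $a^{WD}b=ba^{WD}=0$ — is met, with the only non-listed condition having been recovered from the Hermitian assumption.

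Having verified every hypothesis, I would simply invoke Theorem \ref{t3.16} to obtain $(a+b)^{WD\dagger}=a^{WD\dagger}+b^{WD\dagger}$, which is exactly the asserted identity. There is no genuine obstacle in this argument; the only point worth making explicit is that being Hermitian is precisely what is needed to collapse the involution-based condition $a^*b=ab^*=0$ of the parent theorem onto the already-assumed orthogonality relation $ab=0$, so that the extra structure of Theorem \ref{t3.16} is available for free in this setting.
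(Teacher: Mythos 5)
Your proposal is correct and matches the paper's intent exactly: the corollary is presented as an immediate consequence of Theorem \ref{t3.16}, and the only work needed is to observe that $a^*=a$ and $b^*=b$ turn the condition $a^*b=ab^*=0$ into the already-assumed $ab=0$. Nothing further is required.
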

% \begin{proof}
%     We have $a^*=a$ and $b^*=b$, so $ab=ba=0$ implies $ba^*=0$ and $ab^*=0$. Now,  from Theorem \ref{t3.16}, we get $(a+b)^{WD\dagger}=a^{WD\dagger}+b^{WD\dagger}$.
% \end{proof}

\begin{theorem}
Let $a,b\in R^{WD\dagger}$. If $ab=ba$, $a^*b=b^*a$, $a^{WD}abb^{\dagger}=bb^{\dagger}a^{WD}a,$ and $bb^{WD}a^{WD}=a^{WD}bb^{WD}$, then $(ab)^{WD\dagger}=b^{WD\dagger}a^{WD\dagger}.$
\end{theorem}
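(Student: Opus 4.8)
The plan is to split the claim into two independent reverse-order laws — one for the WD part and one for the Moore--Penrose part — and then glue them together using the representation $a^{WD\dagger}=a^{WD}aa^{\dagger}$ furnished by Theorem~\ref{ad1}. Throughout I will use this representation for $a$, $b$ and $ab$, so that the target identity becomes the purely algebraic statement $(ab)^{WD}(ab)(ab)^{\dagger}=\big(b^{WD}bb^{\dagger}\big)\big(a^{WD}aa^{\dagger}\big)$. To make sense of the left-hand side I first need $ab\in R^{WD\dagger}$, i.e.\ $ab\in R^{WD}\cap R^{\dagger}$, which the two reverse-order laws will supply as a by-product.

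First I would dispose of the WD factor. The hypotheses $ab=ba$ and $bb^{WD}a^{WD}=a^{WD}bb^{WD}$ are exactly the assumptions of Theorem~\ref{t5.2}, so that result gives at once $(ab)^{WD}=b^{WD}a^{WD}$; in particular $ab\in R^{WD}$. The remaining two hypotheses, $a^{WD}abb^{\dagger}=bb^{\dagger}a^{WD}a$ and $a^*b=b^*a$, are reserved for the Moore--Penrose side and the final gluing.

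Next comes the crux: proving the Moore--Penrose reverse-order law $(ab)^{\dagger}=b^{\dagger}a^{\dagger}$ from $ab=ba$ and $a^*b=b^*a$ alone. Taking the involution of $ab=ba$ yields $b^*a^*=a^*b^*$, while $a^*b=b^*a$ says precisely that $a^*b$ is Hermitian; these are the only structural facts available. I would set $x=b^{\dagger}a^{\dagger}$ and verify the four Penrose equations for the pair $(ab,x)$. The idempotent equations (1)--(2) are routine once (3)--(4) are known, and the two Hermiticity equations (3)--(4) reduce, after cancelling the outer factors, to two Greville-type range identities such as $bb^{\dagger}\,a^*ab=a^*ab$. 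Here the useful manipulation is $a^*ab=a^*(ba)=(a^*b)a=(b^*a)a=b^*a^2$, which rewrites these identities in a form where one may hit the relevant element $w=(1-bb^{\dagger})a^*ab$ with $b^*$ (giving $b^*w=0$ via $b^*bb^{\dagger}=b^*$) and then invoke properness of the ring, $z^*z=0\Rightarrow z=0$, to force $w=0$. I expect this Moore--Penrose reverse-order step to be the main obstacle, precisely because (as a small matrix check confirms) $a^{\dagger}$ need not commute with $b$ even under both hypotheses, so the identity cannot be obtained by naive commutation and must be extracted through the star-equations and proper-ring cancellation. Once it is in hand, $ab\in R^{\dagger}$, and together with the previous step $ab\in R^{WD\dagger}$.

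Finally I would glue everything together. Applying Theorem~\ref{ad1} to $ab$ and substituting the two product formulas obtained above gives
\begin{align*}
(ab)^{WD\dagger} &= (ab)^{WD}(ab)(ab)^{\dagger} = b^{WD}a^{WD}\,ab\,b^{\dagger}a^{\dagger} = b^{WD}\big(a^{WD}a\,bb^{\dagger}\big)a^{\dagger}\\
&= b^{WD}\big(bb^{\dagger}a^{WD}a\big)a^{\dagger} = \big(b^{WD}bb^{\dagger}\big)\big(a^{WD}aa^{\dagger}\big) = b^{WD\dagger}a^{WD\dagger},
\end{align*}
where the third equality uses $ab=ba$ to write $a^{WD}\,ab\,b^{\dagger}=a^{WD}a\,bb^{\dagger}$, the fourth equality is exactly the remaining hypothesis $a^{WD}a\,bb^{\dagger}=bb^{\dagger}a^{WD}a$, and the last equality re-reads each parenthesised block as a WDMP inverse via Theorem~\ref{ad1}. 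This completes the argument, modulo the Moore--Penrose reverse-order lemma flagged above as the principal difficulty.
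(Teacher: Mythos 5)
Your overall architecture --- the WD reverse-order law via Theorem~\ref{t5.2}, a Moore--Penrose reverse-order law, and a final gluing through the representation $a^{WD\dagger}=a^{WD}aa^{\dagger}$ of Theorem~\ref{ad1} --- is reasonable, and the first and last steps are sound as written: the hypotheses $ab=ba$ and $bb^{WD}a^{WD}=a^{WD}bb^{WD}$ are exactly those of Theorem~\ref{t5.2}, and your closing chain of equalities is correct once both product formulas are in hand. The fatal problem is the middle step. The lemma you rely on, namely that $(ab)^{\dagger}=b^{\dagger}a^{\dagger}$ follows from $ab=ba$ and $a^*b=b^*a$ \emph{alone}, is false, so no amount of proper-ring cancellation will deliver it. Take $R=\mathbb{C}^{2\times 2}$ with conjugate transpose (a proper $*$-ring) and $a=b=\left(\begin{smallmatrix}1&1\\0&0\end{smallmatrix}\right)$. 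Then $ab=ba=a$ and $a^*b=b^*a=a^*a$, but $(ab)^{\dagger}=a^{\dagger}=\tfrac12\left(\begin{smallmatrix}1&0\\1&0\end{smallmatrix}\right)$, whereas $b^{\dagger}a^{\dagger}=(a^{\dagger})^{2}=\tfrac14\left(\begin{smallmatrix}1&0\\1&0\end{smallmatrix}\right)=\tfrac12 a^{\dagger}$. Even the weaker identity $(ab)(ab)^{\dagger}=ab\,b^{\dagger}a^{\dagger}$, which is all your gluing step actually requires, fails for this pair ($aa^{\dagger}$ versus $\tfrac12\,aa^{\dagger}$).

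This example violates the hypothesis $a^{WD}abb^{\dagger}=bb^{\dagger}a^{WD}a$, so it does not contradict the theorem itself; what it shows is that you have allotted too few hypotheses to the Moore--Penrose step. A correct argument must bring $a^{WD}abb^{\dagger}=bb^{\dagger}a^{WD}a$ (jointly with $a^*b=b^*a$ and $ab=ba$) to bear on identifying $(ab)(ab)^{\dagger}$ --- for instance by verifying directly that $y=b^{WD}bb^{\dagger}a^{WD}aa^{\dagger}$ satisfies the three defining equations $y(ab)y=y$, $(ab)y=(ab)(ab)^{\dagger}$ and $y(ab)^{k}=(ab)^{WD}(ab)^{k}$ of a WDMP inverse of $ab$ --- rather than by first establishing a free-standing reverse-order law for $\dagger$ from the two commutation-type identities you reserved for it.
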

% \begin{proof}We have $ab=ba$, $a^*b=b^*a$ imply that $(ab)^{\dagger}=b^{\dagger}a^{\dagger}=a^{\dagger}b^{\dagger}.$ And from the above result $(ab)^{WD}=b^{WD}a^{WD}.$ Further, we get
% \begin{align*}
%     (ab)^{WD\dagger}&=(ab)^{WD}ab(ab)^{\dagger}\\
%     &=b^{WD}a^{WD}abb^{\dagger}a^{\dagger}\\
%     &=b^{WD}bb^{\dagger}a^{WD}aa^{\dagger}\\
%     &=b^{WD\dagger}a^{WD\dagger}.
% \end{align*}
% \end{proof}
When $a$ and $b$  are both idempotent and Hermitian, then the forward-order law for  WDMP inverse  holds under a few conditions. This is shown next.
\begin{theorem}
Let $a,b\in R^{WD\dagger}$ be both  idempotent and Hermitian,  and\\ $k=max\{ind(a), ind(b)\}.$  If $ab=ba$ and $b^{WD}ba=ab^{WD}b$, then $(ab)^{WD\dagger}=a^{WD\dagger}b^{WD\dagger}.$
\end{theorem}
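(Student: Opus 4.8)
The plan is to exploit the fact that for an idempotent Hermitian element every generalized inverse in sight collapses to the element itself, so that the forward-order law reduces to checking that $ab$ inherits idempotency and the Hermitian property.

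First I would record the values of the individual inverses. Since $a^2=a$ and $a^*=a$, the four Penrose equations are satisfied by $a$ itself (indeed $a\cdot a\cdot a=a$ and $(a\cdot a)^*=a^2=a\cdot a$, and likewise for the remaining two), so $a^{\dagger}=a$; by Theorem \ref{dev} we also have $a^{WD}=a$. Feeding these into the solution form of Theorem \ref{ad1} gives $a^{WD\dagger}=a^{WD}aa^{\dagger}=a^3=a$, and the identical computation yields $b^{\dagger}=b$, $b^{WD}=b$, and $b^{WD\dagger}=b$. In particular the hypothesis $b^{WD}ba=ab^{WD}b$ becomes, after substituting $b^{WD}=b$ and using $b^2=b$, exactly $ba=ab$, so it is consistent with (indeed already implied by) the stated commutativity.

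Next I would verify that $ab$ is again idempotent and Hermitian. Using $ab=ba$ together with $a^2=a$ and $b^2=b$, one has $(ab)^2=a(ba)b=a(ab)b=a^2b^2=ab$, so $ab$ is idempotent; and $(ab)^*=b^*a^*=ba=ab$, so $ab$ is Hermitian. Consequently $ab\in R^{WD}\cap R^{\dagger}$ (idempotents are WD invertible by Theorem \ref{dev}, and an idempotent Hermitian element is its own Moore-Penrose inverse), and rerunning the computation of the first step for $ab$ in place of $a$ gives $(ab)^{WD\dagger}=ab$.

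Finally I would assemble the pieces: $(ab)^{WD\dagger}=ab=a\cdot b=a^{WD\dagger}b^{WD\dagger}$, which is precisely the asserted forward-order law. There is no serious obstacle in this argument; the only point requiring genuine care is the preservation step, where commutativity is essential for the idempotency of $ab$ and both commutativity and the Hermitian property are needed to obtain $(ab)^*=ab$. Once the individual inverses are identified with the elements themselves, everything reduces to direct substitution.
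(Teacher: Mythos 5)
Your proof is correct. The key observation --- that an idempotent Hermitian element is simultaneously its own Moore--Penrose, WD, and WDMP inverse, so that the forward-order law reduces to showing $ab$ is again idempotent and Hermitian --- is sound, and each step checks out: the four Penrose equations for $a^{\dagger}=a$, Theorem \ref{dev} for $a^{WD}=a$, Theorem \ref{ad1} for $a^{WD\dagger}=a^{WD}aa^{\dagger}=a$, and the commutativity computation $(ab)^2=a^2b^2=ab$, $(ab)^*=b^*a^*=ba=ab$. This is more economical than an argument that verifies the three defining equations $y(ab)y=y$, $(ab)y=(ab)(ab)^{\dagger}$, $y(ab)^k=(ab)^{WD}(ab)^k$ directly for $y=a^{WD\dagger}b^{WD\dagger}$ using the stated hypotheses; your route makes visible that under the idempotent--Hermitian assumption the extra hypothesis $b^{WD}ba=ab^{WD}b$ carries no independent content. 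Two small refinements are worth recording. First, since WD inverses are in general not unique, a given $b^{WD}$ need not literally equal $b$; however, the defining relation $b^{WD}b^{k+1}=b^{k}$ forces $b^{WD}b=b$ for any WD inverse of an idempotent, so $b^{WD\dagger}=b^{WD}bb^{\dagger}=(b^{WD}b)b=b$ holds for every admissible choice, and your conclusion is unaffected. Second, Theorem \ref{ad1} only exhibits $a^{WD}aa^{\dagger}$ as \emph{a} solution of the WDMP equations; for an idempotent Hermitian $p$ the system $ypy=y$, $py=pp^{\dagger}=p$, $yp=p^{WD}p=p$ in fact forces $y=y(py)=yp=p$, so the WDMP inverse is unique here and the identification $(ab)^{WD\dagger}=ab$ is unambiguous. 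With these two remarks supplied, the argument is complete.
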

% \begin{proof}
% Since $a$ and $b$  are both idempotent and Hermitian, so $a^{\dagger}=a$ and $b^{\dagger}=b$, and $ab=ba$, we get $(ab)^{\dagger}=b^{\dagger}a^{\dagger}=a^{\dagger}b^{\dagger}.$ Now, from Theorem \ref{t5.3}, we get 
% \begin{align*}
%     (ab)^{WD\dagger}&=(ab)^{WD}ab(ab)^{\dagger}\\
%     &=a^{WD}b^{WD}abb^{\dagger}a^{\dagger}\\
%     &=a^{WD}b^{WD}abba\\
%     &=a^{WD}b^{WD}ba\\
%     &=a^{WD}ab^{WD}b\\
%     &=a^{WD}aa^{\dagger}b^{WD}bb^{\dagger}\\
%     &=a^{WD\dagger}b^{WD\dagger}.
% \end{align*}
% Hence, $(ab)^{WD\dagger}=a^{WD\dagger}b^{WD\dagger}.$
% \end{proof}
We end this section with a straightforward derivation. 
\begin{theorem}
Let $a,b\in R^{WD\dagger}$. 
\begin{enumerate}
\item If $(ab)^{WD\dagger}=b^{WD\dagger}a^{WD\dagger}$, then the following conditions hold:
\begin{enumerate}[(i)]
    \item $b(ab)^{WD\dagger}a=bb^{\dagger}a^{WD}a$,
    \item $b(ab)^{WD\dagger}a^{k+1}=bb^{\dagger}a^{k},$
    \item $b^{k+1}(ab)^{WD\dagger}a=b^{k+1}b^{\dagger}a^{WD}a.$
\end{enumerate}
\item If $(ab)^{WD\dagger}=a^{WD\dagger}b^{WD\dagger}$, then the following conditions hold:
\begin{enumerate}[(i)]
    \item $a(ab)^{WD\dagger}b=aa^{\dagger}b^{WD}b,$
    \item $a^{k+1}(ab)^{WD\dagger}b=a^{k+1}a^{\dagger}b^{WD}b,$
    \item $a(ab)^{WD\dagger}b^{k+1}=aa^{\dagger}b^{k}.$
\end{enumerate}
\end{enumerate}
\end{theorem}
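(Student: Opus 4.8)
The plan is to substitute the hypothesized product form of $(ab)^{WD\dagger}$ into each of the six target expressions and then collapse the resulting word in $a,b$ using only a handful of ``atomic'' identities coming from Definition \ref{am2}, Theorem \ref{ad1} and Definition \ref{am1}. Throughout I write $p=a^{WD\dagger}$ and $q=b^{WD\dagger}$, so that in Part 1 the hypothesis reads $(ab)^{WD\dagger}=qp$ (reverse order) while in Part 2 it reads $(ab)^{WD\dagger}=pq$ (forward order). I take $k=\max\{ind(a),ind(b)\}$, the common index, so that the power identities below are simultaneously available for $a$ and for $b$.

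The identities I will lean on are: (a) the defining equation $a\,p=aa^{\dagger}$, and symmetrically $b\,q=bb^{\dagger}$; (b) the one-sided identity $p\,a=a^{WD}a$, obtained by writing $p=a^{WD}aa^{\dagger}$ (Theorem \ref{ad1}) and using $aa^{\dagger}a=a$, so $pa=a^{WD}aa^{\dagger}a=a^{WD}(aa^{\dagger}a)=a^{WD}a$, and likewise $q\,b=b^{WD}b$; and (c) the power equation $p\,a^{k}=a^{WD}a^{k}$ of Definition \ref{am2}, which together with the WD equation $a^{WD}a^{k+1}=a^{k}$ of Definition \ref{am1} gives $p\,a^{k+1}=(pa^{k})a=a^{WD}a^{k}a=a^{WD}a^{k+1}=a^{k}$, and symmetrically $q\,b^{k+1}=b^{k}$. (Both (c)-type identities persist for every exponent $\ge ind(a)$ by a one-step induction, which is why the common index $k$ is legitimate.) Outer powers are absorbed by peeling off a single factor, e.g. $b^{k+1}q=b^{k}(bq)=b^{k+1}b^{\dagger}$ and $a^{k+1}p=a^{k}(ap)=a^{k+1}a^{\dagger}$.

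With these in hand every item is a one-line associativity computation. For Part 1: (i) $b(qp)a=(bq)(pa)=bb^{\dagger}a^{WD}a$; (ii) $b(qp)a^{k+1}=(bq)(p\,a^{k+1})=bb^{\dagger}a^{k}$; (iii) $b^{k+1}(qp)a=(b^{k+1}q)(pa)=b^{k+1}b^{\dagger}a^{WD}a$. Part 2 is the exact mirror image under the exchange $a\leftrightarrow b$, $p\leftrightarrow q$, with the reverse order $qp$ replaced by the forward order $pq$: (i) $a(pq)b=(ap)(qb)=aa^{\dagger}b^{WD}b$; (ii) $a^{k+1}(pq)b=(a^{k+1}p)(qb)=a^{k+1}a^{\dagger}b^{WD}b$; (iii) $a(pq)b^{k+1}=(ap)(q\,b^{k+1})=aa^{\dagger}b^{k}$.

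No genuine obstacle remains; the only thing requiring care is the bracketing, namely deciding for each item whether to absorb the spare factor of $a$ (or $b$) on the inner or the outer side, so that identity (a) produces the $aa^{\dagger}$- and $bb^{\dagger}$-type idempotents while (b) and (c) deliver the $a^{WD}a$- and $a^{k}$-type terms demanded by the right-hand sides. A secondary point to record is that the representation $p=a^{WD}aa^{\dagger}$ used in (b) is unambiguous: by Theorem \ref{ad3} the WDMP inverse coincides with the (unique) right pseudo core inverse, so no choice-dependence enters the argument.
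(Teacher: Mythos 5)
Your proposal is correct: substituting the product form, splitting the word as $(bq)(pa)$ etc., and collapsing via $ap=aa^{\dagger}$, $pa=a^{WD}a$ (from the representation $a^{WD\dagger}=a^{WD}aa^{\dagger}$ of Theorem \ref{ad1}) and $pa^{k+1}=a^{k}$ is exactly the ``straightforward derivation'' the paper intends, and your remarks on the common index $k$ and on uniqueness via Theorem \ref{ad3} close the only two small gaps one could worry about. No further comment is needed.
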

%  The next result for hybrid forward order law between WD inverse and dual core inverse.
% \begin{theorem}
% Let $a,b\in\core{R}$ with $ab=ba$. If $ab\in R^{WD}$ and $\core{b}ba=a\core{b}b$, then $(ab)^{WD}=\core{a}\core{b}.$
% \end{theorem}

% \begin{proof}
% By the definition of dual core, we have  $(\core{a}a)^*=\core{a}a$, $a^2\core{a}=a$,  $(\core{a})^2a=\core{a}$ and $(\core{b}b)^*=\core{b}b$, $b^2\core{b}=b$,  $(\core{b})^2b=\core{b}$. We will show that  ${(ab)^{WD}}=\core{a}\core{b}$ by using definition of WD inverse. Setting $x=\core{a}\core{b}$ and $k=ind(ab)$, we obtain
% \begin{align}\label{wd1}
%     abxab&=ab\core{a}\core{b}ab\nonumber\\
%     &=ab\core{a}\core{b}ba\nonumber\\
%     &=ba\core{a}a\core{b}b\nonumber\\
%     &=ba\core{b}b\nonumber\nonumber\\
%     &=ab\core{b}b\nonumber\\
%     &=ab,
% \end{align}
% \begin{align}\label{wd2}
%     (ab)^{k+1}x&=b^{k+1}a^{k+1}\core{a}\core{b}\nonumber\\
%     &=b^{k+1}a^{k}\core{b}\nonumber\\
%      &=a^{k}b^{k+1}\core{b}\nonumber\\
%      &=a^kb^k\nonumber\\
%      &=(ab)^k
% \end{align}
% and 
% \begin{align}\label{wd3}
%     x(ab)^{k+1}&=\core{a}\core{b}b^{k+1}a^{k+1}\\
%     &=\core{a}b^ka^{k+1}\nonumber\\
%     &=\core{a}a^{k+1}b^k\nonumber\\
%     &=a^kb^k\nonumber\\
%     &=(ab)^k.
% \end{align}
% From \eqref{wd1}, \eqref{wd2} and \eqref{wd3}, we get $(ab)^{WD}=\core{a}\core{b}.$
% \end{proof}
% The next result can be proved the steps as in above theorem  following similarly.
% \begin{theorem}
% Let $a,b\in\dcore{R}$ with $ab=ba$. If $ab\in R^{WD}$ and $b\dcore{b}a=ab\dcore{b}$, then $(ab)^{WD}=\dcore{b}\dcore{a}.$
% \end{theorem}
\section{Conclusion}
The important findings are summarized as follows:
\begin{itemize}
    \item
The notion of WD inverse and WDMP inverse have been introduced in rings.

\item Some relations among  WD inverse, Drazin inverse, DMP inverse, WDMP inverse,
 right pseudo core inverse and Hirano inverse have been established.

\item Finally, we have presented a few sufficient conditions such that the reverse-order law and forward-order law for  WD and WDMP generalized inverses hold.
\end{itemize}

\section{Acknowledgements}
The first author acknowledges the support of the Council of Scientific and Industrial Research-University Grants Commission, India.  The authors  thank Vaibhav Shekhar  for his helpful suggestions on some parts of this article.

\end{document}